\newtheorem{question}[theorem]{Question}
\Crefname{claim}{Claim}{Claims}
\DeclareMathOperator{\incident}{Inc}
\newcommand{\euler}{\mathrm{e}}
\newcommand{\Rbl}{\mathcal{R}_{\text{BL}}}
\newcommand{\Rtl}{\mathcal{R}_{\text{TL}}}
\newcommand{\Ruh}{\mathcal{R}_{\text{UH}}}
\newcommand{\Rst}{\mathcal{R}_{\text{ST}}}
\newcommand{\Rbb}{\ensuremath{\mathbb{R}}}
\newcommand{\Zbb}{\ensuremath{\mathbb{Z}}}
\newcommand{\Rcal}{\ensuremath{\mathcal{R}}}
\newcommand{\Hcal}{\ensuremath{\mathcal{H}}}
\title{Polychromatic Colorings of Geometric Hypergraphs via Shallow Hitting Sets}
\author{Tim Planken}{University of Birmingham}{txp265@student.bham.ac.uk}{}{}
\author{Torsten Ueckerdt}{Karlsruhe Institute of Technology}{torsten.ueckerdt@kit.edu}{https://orcid.org/0000-0002-0645-9715}{}
\authorrunning{T. Planken and T. Ueckerdt}
\keywords{geometric hypergraphs, range spaces, polychromatic coloring, shallow hitting sets}
\begin{document}

\maketitle

\begin{abstract}
    A range family $\Rcal$ is a family of subsets of $\Rbb^d$, like all halfplanes, or all unit disks.
    Given a range family $\Rcal$, we consider the $m$-uniform range capturing hypergraphs $\Hcal(V,\Rcal,m)$ whose vertex-sets $V$ are finite sets of points in $\Rbb^d$ with any $m$ vertices forming a hyperedge $e$ whenever $e = V \cap R$ for some $R \in \Rcal$.
    Given additionally an integer $k \geq 2$, we seek to find the minimum $m = m_\Rcal(k)$ such that every $\Hcal(V,\Rcal,m)$ admits a polychromatic $k$-coloring of its vertices, that is, where every hyperedge contains at least one point of each color.
    Clearly, $m_\Rcal(k) \geq k$ and the gold standard is an upper bound $m_{\Rcal}(k) = O(k)$ that is linear in $k$.

    A $t$-shallow hitting set in $\Hcal(V,\Rcal,m)$ is a subset $S \subseteq V$ such that $1 \leq |e \cap S| \leq t$ for each hyperedge $e$; i.e., every hyperedge is hit at least once but at most $t$ times by $S$.
    We show for several range families $\Rcal$ the existence of $t$-shallow hitting sets in every $\Hcal(V,\Rcal,m)$ with $t$ being a constant only depending on $\Rcal$.
    This in particular proves that $m_{\Rcal}(k) \leq tk = O(k)$ in such cases, improving previous polynomial bounds in $k$.
    Particularly, we prove this for the range families of all axis-aligned strips in $\Rbb^d$, all bottomless and topless rectangles in $\Rbb^2$, and for all unit-height axis-aligned rectangles in $\Rbb^2$.
\end{abstract}

\section{Introduction}
\label{sec:introduction}

We investigate polychromatic colorings of geometric hypergraphs defined by a finite set of points $V \subset \Rbb^d$ and a family $\Rcal$ of subsets of $\Rbb^d$, called a \emph{range family}.
Possible range families include for example all unit balls, all axis-aligned boxes, all halfplanes, or all translates of a fixed polygon.
In this paper we prove results for the following range families:
\begin{itemize}
    \itemsep0pt
    \item the family $\Rst = \Rst^1 \cup \cdots \cup \Rst^d$ with
    $\Rst^i = \{ \{(x_1,\ldots,x_d) \in \Rbb^d \mid a \leq x_i \leq b\} \mid a,b \in 
    \Rbb\}$ of all axis-aligned strips in $\Rbb^d$,
    \item the family $\Rbl = \{ [a,b] \times (-\infty,c] \mid a,b,c \in \Rbb \}$ of all bottomless rectangles in $\Rbb^2$,
    \item the family $\Rtl = \{ [a,b] \times [c, \infty) \mid a,b,c \in \Rbb \}$ of all topless rectangles in $\Rbb^2$, and
    \item the family $\Ruh = \{ [a,b] \times [c,c+1] \mid a,b,c \in \Rbb \}$ of all unit-height rectangles in $\Rbb^2$.
\end{itemize}

For a fixed range family $\Rcal$ and any finite point set $V \subset \Rbb^d$, the corresponding \emph{range capturing hypergraph} $H=\Hcal(V,\Rcal)$ has vertex set $V(H)=V$, and a subset $e \subseteq V$ is a hyperedge in $E(H)$ whenever there exists a range $R \in \Rcal$ with $e = V \cap R$.
In this case, we say that $e$ is \emph{captured} by the range $R$.
That is, we have points in $\Rbb^d$ and a subset of points forms a hyperedge whenever these vertices and no other vertices are captured by a range.
For example, a set $e$ of points in $V \subset \Rbb^d$ forms a hyperedge in $\Hcal(V,\Rst)$ if and only if in at least one of the $d$ coordinates, the points in $e$ are consecutive in $V$.
(We assume throughout that points in $V$ lie in general position, i.e., have pairwise different coordinates.)

For a positive integer $k$, a $k$-coloring $c \colon V \to \{1,\ldots,k\}$ of the vertices of a hypergraph $H=(V,E)$ is called \emph{proper} if each hyperedge $e \in E$ contains at least two colors, i.e., $|\{c(v) \mid v \in e\}| \geq 2$, and \emph{polychromatic} if each hyperedge $e \in E$ contains all $k$ colors, i.e., $|\{c(v) \mid v \in e\}| = k$.
Hence, proper $2$-colorings and polychromatic $2$-colorings are the same concept.
However, if $k \geq 3$, then every polychromatic $k$-coloring is also a proper $k$-coloring but the converse is not true in general.
In fact, for polychromatic colorings we always seek to \emph{maximize} the number of colors, as each polychromatic $k$-coloring, $k \geq 2$, also gives a polychromatic $(k-1)$-coloring by merging two color classes into one.

For polychromatic colorings of range capturing hypergraphs with respect to a given point set $V \subset \Rbb^d$ and range family $\Rcal$, we are particularly interested in the $m$-uniform\footnote{%
A hypergraph $H = (V,E)$ is \emph{$m$-uniform} if every hyperedge $e \in E$ has size $|e| = m$.
So $2$-uniform hypergraphs are just graphs (without loops), while $m$-uniform hypergraphs are sometimes also called $m$-graphs.%
} subhypergraph $\Hcal(V, \Rcal, m)$ that consists of all hyperedges in $\Hcal(V, \Rcal)$ of size exactly $m$.
Instead of fixing $m$ and then maximizing the $k$ for which polychromatic $k$-colorings of $\Hcal(V,\Rcal,m)$ exist, one usually consider the equivalent setup of fixing $k$ and minimizing $m$.

\begin{definition}
    For a range family $\Rcal$ and a positive integer $k$, the integer $m = m_\Rcal(k)$ is defined to be the smallest integer such that for every finite set of points $V \subset \Rbb^d$ there exists a polychromatic $k$-coloring of $\Hcal(V, \Rcal, m)$.
\end{definition}

Clearly, $m_\Rcal(k) \geq k$ since every hyperedge must contain $k$ different colors.
Moreover, we have $m_\Rcal(2) \leq m_\Rcal(3) \leq \cdots$.
But note that it is also possible that $m_\Rcal(k) = \infty$ for some $k$.
Namely this happens if for every positive integer $m$ there exists a finite set of points $V \subset \Rbb^d$ such that the corresponding hypergraph $\Hcal(V,\Rcal,m)$ has no polychromatic $k$-coloring.
In fact, throughout the over 40 years since their introduction by Pach~\cite{Pach80,Pach86}, we always observe the following surprising phenomenon for polychromatic $k$-colorings of geometric range spaces and the quantity $m_\Rcal(k)$ as a function of $k$:
Either we already have that $m_\Rcal(2) = \infty$, or the best known lower bounds are of the form $m_\Rcal(k) = \Omega(k)$ for all $k \geq 2$.

\begin{question}\label{oque:natural-family}
    Is there a geometric range family $\Rcal$ with $m_\Rcal(2) < \infty$ and $m_\Rcal(k) = \omega(k)$?
\end{question}

So, if the answer to~\cref{oque:natural-family} is 'No', then we always have either $m_\Rcal(2) = \infty$ or $m_\Rcal(k) = O(k)$.
With this paper, we seek to make progress on~\cref{oque:natural-family} by improving the upper bounds on $m_\Rcal(k)$ in several further cases from superlinear to $m_\Rcal(k) = O(k)$.
We do so by proving a stronger statement, namely the existence of so-called $t$-shallow hitting sets with $t = O(1)$; see~\cref{subsec:related-work,subsec:our-results} below for the formal definition and a detailed discussion.

\subsection{Related work}
\label{subsec:related-work}

There is a rich literature on numerous range families $\Rcal$, polychromatic colorings of their range capturing hypergraphs, and upper and lower bounds on $m_\Rcal(k)$ in terms of $k$~\cite{ACCIKLSST11,ACCCHHKLLMRU13,SY12,AKV17,chekan2022polychromatic,CKMU14,PP16,PPT13,CPST09,P13,PT10,Keszegh12,KLP16,KP19,Kovacs15,CKMPUV23,CKMU13,KP15}.
Let us mention just a few here, while the interested reader is invited to have a look at the slightly outdated survey article~\cite{PPT13} and the excellent website~\cite{Zoo} maintained by Keszegh and P\'alv\"olgyi.

\begin{enumerate}[label = (\arabic*)]
    \itemsep0pt
    \item[] \underline{(Some) known range families $\Rcal$ with $m_\Rcal(k) < \infty$ for all $k \geq 2$:}
    \item For axis-aligned strips $\Rst$ in $\Rbb^d$ it is known that $m_{\Rst}(k) = O_d(k \log k)$~\cite{ACCIKLSST11}.
    \label{1:known-strips}
    \item For bottomless rectangles $\Rbl$ in $\Rbb^2$ it is known that $1.67k \leq m_{\Rbl}(k) \leq 3k-2$~\cite{ACCCHHKLLMRU13}.
    \label{2:known-bottomless}
    \item For halfplanes $\Rcal$ in $\Rbb^2$ it is known that $m_\Rcal(k) = 2k-1$~\cite{SY12}.
    \label{3:known-halfplanes}
    \item For axis-aligned squares $\Rcal$ in $\Rbb^2$ it is known that $m_\Rcal(k) = O(k^{8.75})$~\cite{AKV17}.
    \label{4:known-squares}
    \item For bottomless and topless rectangles $\Rbl \cup \Rtl$ it is known that $m_\Rcal(k) = O(k^{8.75})$~\cite{chekan2022polychromatic}.
    \label{5:known-bottomless-topless}
    \item For translates of a convex polygon $\Rcal$ in $\Rbb^2$ it is known that $m_\Rcal(k) = O(k)$~\cite{GV09}.
    \label{6:known-polygon}
    \item For homothets of a triangle $\Rcal$ in $\Rbb^2$ it is known that $m_\Rcal(k) = O(k^{4.09})$~\cite{CKMU14,KP15}.
    \label{7:known-triangle}
    \item For translates of an octant $\Rcal$ in $\Rbb^3$ it is known that $m_\Rcal(k) = O(k^{5.09})$~\cite{CKMU14,KP15}.
    \label{8:known-octant}
    \item[] \underline{(Some) known range families $\Rcal$ with $m_\Rcal(k) = \infty$ for all $k \geq 2$:}
    \item For unit disks $\Rcal$ in $\Rbb^2$ it is known that $m_\Rcal(2) = \infty$~\cite{PP16}.
    \label{9:known-unit-disk}
    \item For strips $\Rcal$ in any direction in $\Rbb^2$ it is known that $m_\Rcal(2) = \infty$~\cite{PPT13}.
    \label{10:known-all-strips}
    \item For axis-aligned rectangles $\Rcal$ in $\Rbb^2$ it is known that $m_\Rcal(2) = \infty$~\cite{CPST09}.
    \label{11:known-rectangles}
    \item For bottomless rectangles and horizontal strips $\Rbl \cup \Rst^2$ we have $m_{\Rbl \cup \Rst^2}(2) = \infty$~\cite{chekan2022polychromatic}.\label{12:known-bottomless-and-strips}
\end{enumerate}
Crucially, let us mention again, that in each of \ref{1:known-strips}--\ref{8:known-octant} the best known lower bound on $m_\Rcal(k)$ is linear in $k$, and it might be (in the light of~\cref{oque:natural-family}) that in fact $m_\Rcal(k) = O(k)$ holds.

\smallskip

One particular tool to prove for a range family $\Rcal$ that $m_\Rcal(k) = O(k)$ are shallow hitting sets.
For a hypergraph $H = (V,E)$ and a positive integer $t$, a subset $X \subseteq V$ of vertices is a \emph{$t$-shallow hitting set} if 
\[
    1 \leq |e \cap X| \leq t \qquad \text{for every $e \in E$.}
\]
That is, $X$ contains at least one vertex of each hyperedge ($X$ is hitting) but at most $t$ vertices of each hyperedge ($X$ is $t$-shallow).
Shallow hitting sets for polychromatic colorings of range capturing hypergraphs have been used implicitly in~\cite{SY12}, while being developed as a general tool in~\cite{KP19,chekan2022polychromatic,CKMPUV23}.
Clearly, for the $m$-uniform hypergraph $\Hcal(V,\Rcal,m)$ taking $X = V$ would be an $m$-shallow hitting set.
But the challenge is to find $t$-shallow hitting sets with $t = O(1)$ being a constant\footnote{Recall that $m = m_\Rcal(k) \geq k$ is a growing function in $k$.} independent of $m$.
If we succeed, this implies $m_\Rcal(k) = O(k)$.

\begin{lemma}[Keszegh and P\'alv\"olgyi~\cite{KP19}]\label{lem:shallow-hitting-set}{\ \\}
    If for a shrinkable range family $\Rcal$ there exists a constant $t \geq 1$ such that for every $m \geq 1$ every hypergraph $\Hcal(V,\Rcal,m)$ admits a $t$-shallow hitting set, then $m_\Rcal(k) \leq t(k-1) + 1 = O(k)$.
\end{lemma}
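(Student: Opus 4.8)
The plan is to prove by induction on $k$ the following claim: if $\Rcal$ is shrinkable and every $\Hcal(V,\Rcal,m)$ admits a $t$-shallow hitting set, then, setting $m = t(k-1)+1$, every $\Hcal(V,\Rcal,m)$ admits a polychromatic $k$-coloring. The base case $k=1$ is immediate: then $m=1$, every hyperedge is a single vertex, and the constant coloring $c \equiv 1$ is (vacuously) polychromatic.

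For the inductive step, assume the claim for $k-1$, and write $m = t(k-1)+1$ and $m' = t(k-2)+1$, so that $m' = m - t$. Given a finite $V \subset \Rbb^d$, I would first invoke the hypothesis to obtain a $t$-shallow hitting set $X$ of $\Hcal(V,\Rcal,m)$, assign the new color $k$ to every vertex of $X$, and set $V' = V \setminus X$. Applying the induction hypothesis to the point set $V'$ yields a polychromatic $(k-1)$-coloring of $\Hcal(V',\Rcal,m')$ using colors $1,\ldots,k-1$; keep this coloring on $V'$. Altogether this defines a $k$-coloring $c$ of $V$, and the only thing left is to check that it is polychromatic for $\Hcal(V,\Rcal,m)$.

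So let $e = V \cap R$ be a hyperedge with $|e| = m$, for some $R \in \Rcal$. Since $X$ is hitting, $e$ meets $X$, so $e$ contains a vertex of color $k$. Since $X$ is $t$-shallow, $|e \cap X| \le t$, hence $|e \cap V'| = |e| - |e \cap X| \ge m - t = m'$. Now $e \cap V' = V' \cap R$ is captured by $R$, so it is a hyperedge of $\Hcal(V',\Rcal)$ of size at least $m'$; this is where shrinkability enters, in the form that for the point set $V'$ every range capturing at least $m'$ points contains a range $R'$ capturing exactly $m'$ of them. Thus $R' \cap V'$ is a hyperedge of $\Hcal(V',\Rcal,m')$ with $R' \cap V' \subseteq e \cap V' \subseteq e$, so by the induction hypothesis $e$ already contains all colors $1,\ldots,k-1$, and together with color $k$ coming from $X$ we conclude that $e$ contains all $k$ colors. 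Since $t$ is a constant, this gives $m_\Rcal(k) \le t(k-1)+1 = O(k)$.

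The underlying arithmetic is routine; the two steps that carry real weight are, first, that the hypothesis supplies a $t$-shallow hitting set precisely for the uniformity $m = t(k-1)+1$ that we need, and second, the use of shrinkability to let the induction descend from $\Hcal(V,\Rcal,m)$ to $\Hcal(V',\Rcal,m')$ on the reduced vertex set $V' = V \setminus X$, with the uniformity dropping by exactly $t$ in step with the bound $|e \cap X| \le t$. I expect the descent to be the main obstacle: without shrinkability, $e \cap V'$ is a large hyperedge of $\Hcal(V',\Rcal)$ but need not contain an $m'$-uniform hyperedge, and the induction hypothesis — which only constrains $m'$-uniform hyperedges — could not be invoked, so making this step rigorous is exactly where the definition of shrinkability must be used as stated.
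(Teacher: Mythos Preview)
The paper does not actually include a proof of this lemma; it is quoted from Keszegh and P\'alv\"olgyi~\cite{KP19} and used as a black box. Your argument is correct and is precisely the standard inductive proof: peel off a $t$-shallow hitting set as one color class, observe that each $m$-hyperedge loses at most $t$ vertices and hence (by iterated shrinkability on the reduced point set $V'$) still contains an $m'$-hyperedge of $\Hcal(V',\Rcal,m')$, and recurse. The only minor point worth making explicit is that shrinkability, as stated, reduces the size by one, so going from $|e\cap V'|\ge m'$ down to an exact $m'$-hyperedge requires applying it repeatedly; you allude to this but do not spell it out.
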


Here, a range family $\Rcal$ is \emph{shrinkable} if for every finite set of points $V$, every positive integer $m$ and every hyperedge $e$ in $\Hcal(V,\Rcal,m)$ there exists a hyperedge $e'$ in $\Hcal(V,\Rcal,m-1)$ with $e' \subseteq e$.
Intuitively, we ``decrease the size'' of a range $R \in \Rcal$ with $R \cap V = e$ until the first point of $V$ drops out of the range.
In fact, all range families mentioned in this paper, except the translates of a convex polygon~\ref{6:known-polygon} and unit disks~\ref{9:known-unit-disk}, are shrinkable.

Smorodinsky and Yuditsky~\cite{SY12} prove that every $\Hcal(V,\Rcal,m)$ admits $2$-shallow hitting sets for $\Rcal$ being all halfplanes~\ref{3:known-halfplanes}, which implies $m_\Rcal(k) \leq 2k-1$ in this case.
This is extended to so-called ABA-free hypergraphs in~\cite{KP19} and unions of hypergraphs in~\cite{chekan2022polychromatic}.
On the other hand, for the family $\Rbl$ of all bottomless rectangles~\ref{2:known-bottomless} the bound $m_{\Rbl}(k) \leq 3k-2$ is not proven~\cite{ACCCHHKLLMRU13} by shallow hitting sets, and in fact Keszegh and P\'alv\"olgyi~\cite{KP19}, as well as Chekan and Ueckerdt~\cite{chekan2022polychromatic} ask whether these exist in this case.
For $\Rcal$ being all translates of a fixed convex polygon~\ref{6:known-polygon} the proof~\cite{GV09} for $m_\Rcal(k) = O(k)$ also involves shallow hitting sets, even though these are not explicitly stated as such, and this range family is not shrinkable anyways.
Finally, the family $\Rcal$ of all translates of an octant in $\Rbb^3$~\ref{8:known-octant} is the only case for which shallow hitting sets are known \emph{not} to exist~\cite{CKMPUV23}, which follows from a certain dual problem for bottomless rectangles.

\subsection{Our results}
\label{subsec:our-results}

We consider the range families mentioned at the beginning of~\cref{sec:introduction} of all axis-aligned strips $\Rst$ in $\Rbb^d$, all bottomless $\Rbl$ and all topless $\Rtl$ rectangles in $\Rbb^2$, as well as all unit-height rectangles $\Ruh$ in $\Rbb^2$.
We remark that for the axis-aligned strips $\Rst$ we could assume without loss of generality that these have unit-width.
In this sense, unit-height rectangles are a generalization of horizontal strips.
Additionally, unit-height rectangles are a generalization of bottomless and topless rectangles by ``choosing the unit very large''.
Thus, we can observe that $m_{\Ruh}(k) \geq m_{\Rst^2}(k)$ and $m_{\Ruh}(k) \geq m_{\Rbl \cup \Rtl}(k)$ hold for all $k$.

\begin{description}
    \itemsep 4pt
    \item[] \hspace{-0.5em}Our main results are the following:
    
    \item[\cref{sec:strips}.] The family $\Rst$ of all axis-aligned strips~\ref{1:known-strips} in $\Rbb^d$ allows for $t$-shallow hitting sets for some $t = t(d)=O(d)$, see~\cref{thm:union-of-strips}.
    This gives $m_{\Rst}(k) = O_d(k)$, improving the $O_d(k \log k)$-bound in~\cite{ACCIKLSST11}.
    We complement this with a lower bound construction giving $m_{\Rst}(k) \geq \Omega(k \log d)$, see~\cref{thm:strips-lower-bound-1}.
    This drastically improves the $m_{\Rst}(k) \geq 2 \cdot \left\lceil \frac{2d-1}{2d}\cdot k \right\rceil - 1$ lower bound in~\cite{ACCIKLSST11}.
    
    \item[\cref{sec:bottomless}.] The family $\Rbl$ of all bottomless rectangles~\ref{2:known-bottomless} in $\Rbb^2$ allows for $10$-shallow hitting sets, see~\cref{thm:bottomless-shallow-hitting-set}.
    This answers a question of Keszegh and P\'alv\"olgyi~\cite{KP19}, as well as Chekan and Ueckerdt~\cite{chekan2022polychromatic}, and provides a new proof that $m_{\Rbl}(k) = O(k)$.
    
    \item[\cref{sec:bottomless-topless}.] The family $\Rbl \cup \Rtl$ of all bottomless and topless rectangles~\ref{5:known-bottomless-topless} in $\Rbb^2$ allows for $21$-shallow hitting sets, see~\cref{thm:bottomless-topless-shallow-hitting-set}.
    This already proves that $m_{\Rbl\cup\Rtl}(k) = O(k)$, which we improve to $m_{\Rbl\cup\Rtl}(k) \leq 6k-3$, see \cref{thm:bottomless-topless-m(k)}.
    The family $\Ruh$ of all unit-height rectangles allows for $63$-shallow hitting sets, which already gives $m_{\Ruh}(k) = O(k)$ but can be improved to $m_{\Ruh}(k) \leq 12k - 7$, see~\cref{thm:unit-height-rectangles}.
\end{description}

\subparagraph{Notation.}
For a positive integer $n$ we sometimes use $[n] = \{1,\ldots,n\}$ for the set of the first $n$ integers.
Throughout this paper a hypergraph is a tuple $H=(V,E)$ consisting of a finite set $V$ of vertices (or points) and a finite multiset $E$ of hyperedges, each being a subset of $V$.
That is, hypergraphs may contain several distinct hyperedges forming the same subset of vertices.
Such hyperedges are sometimes called \emph{parallel}, \emph{multiedges}, or \emph{hyperedges of multiplicity $x$} for some $x \geq 2$.

\section{Polychromatic Colorings for Axis-Aligned Strips}
\label{sec:strips}

For a shorthand notation, let us define $m_d(k) = m_{\Rst}(k)$ for the range family $\Rst$ of all axis-aligned strips in $\Rbb^d$, $d \geq 2$.
As~\cite{ACCIKLSST11} pointed out, the problem of determining $m_d(k)$ for $\Rst$ can be seen purely combinatorial.
That is, the problem of determining $m_d(k)$ is equivalent to the following problem.
Given a finite set $V$ of size $n$ and $d$ bijections $\pi_1,\dots,\pi_d \colon \{1,\dots,n\} \to V$, we have to color the set $V$ in $k$ colors such that for each bijection $\pi_i$, every $m_d(k)$ consecutive elements contain an element of each color.
More formally, let $k$ and $d$ be positive integers.
Then, $m_d(k)$ is the least integer such that for any finite set $V$ of size $n$ and any $d$ bijections $\pi_1,\dots,\pi_d \colon \{1,\dots,n\} \to V$, there exists a coloring $c$ of $V$ with $k$ colors such that
\[
    \forall x \in [k] \; \forall i \in [d] \; \forall a \in [n-m_d(k)+1] \; \exists b \in [m_d(k)] \colon c(\pi_i(a+b-1)) = x.
\]

First, we list some known results for $m_d(k)$.
\begin{itemize}
    \item For $d=1$ it is obvious that $m_d(k) = m_1(k) = k$ for all $k$.
    \item For $d=2$ it holds that $m_d(k) = m_2(k) \leq 2k-1$ for all $k$~\cite{ACCIKLSST11}.
    \item For any $d \geq 2$ it holds that  $m_d(k) \leq k (4 \ln k + \ln d)$ for all $k$~\cite{ACCIKLSST11}.
    Thus, if $d$ is a constant, then $m_d(k) \leq O(k \log k)$.
    \item In~\cite{ACCIKLSST11} it is also proven that $m_d(k) \geq 2 \cdot \left\lceil \frac{2d-1}{2d}\cdot k \right\rceil - 1$, while in~\cite{PTT07} it is proven that for every $k$ we have $m_d(k) \to \infty$ as $d \to \infty$.
\end{itemize}

In this section, we prove that $m_d(k) \leq O(kd)$ for axis-aligned strips in $\Rbb^d$ with the method of shallow hitting sets.
This already improves the upper bound from~\cite{ACCIKLSST11} from $O_d(k \log k)$ to $O_d(k)$.
Using another result from the literature, we can even improve this further to $m_d(k) \leq O(k \log d)$.
We complement this by providing a lower bound of the form $m_d(k) \geq \Omega(k \log d)$.

\subsection{Upper Bounds}

Our upper bound uses a recent result about shallow hitting edge sets in regular uniform hypergraphs.
For a vertex $v \in V$ in a hypergraph $H = (V,E)$, the set of incident hyperedges at $v$ is denoted by $\incident(v) = \{e \in E \mid v \in e\}$.
Hypergraph $H$ is \emph{regular} if $|\incident(v)|$, the \emph{degree} of $v$, is the same for all vertices $v \in V$.
For an integer $t \geq 1$, a subset $M \subseteq E$ of hyperedges is a \emph{$t$-shallow hitting edge set} in $H = (V,E)$ if we have
\[
    1 \leq |M \cap \incident(v)| \leq t \qquad \text{for every $v \in V$}.
\]
That is, $1$-shallow hitting edge sets are exactly perfect matchings, while $t$-shallow hitting edge sets for $t \geq 2$ still cover each vertex at least once, but only at most $t$ times.
It turns out, that all regular $r$-uniform hypergraphs admit $t$-shallow hitting sets with $t$ only depending on the uniformity $r$, and not on the number of vertices or their degree.
Crucially, this result even holds for $r$-uniform hypergraphs with multiedges, i.e., where two or more hyperedges can correspond to the same set of $r$ vertices.

\begin{theorem}[Planken and Ueckerdt~\cite{PU23}]
    \label{thm:shallow-hitting-edge-set}
    Every $r$-uniform regular hypergraph $H$ (with possibly multiedges) has a $t(r)$-shallow hitting edge set with $t(r)=\euler r (1+o(1))$.
\end{theorem}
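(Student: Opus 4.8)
The plan is to deduce the statement from the Beck--Fiala iterated rounding technique, which in fact yields the stronger bound $t(r)\le 2r-1$ (and $2r-1\le\euler r(1+o(1))$). Write $H=(V,E)$ with $|\incident(v)|=D$ for every $v\in V$. If $D\le\euler r$ --- in particular if $D<r$, or if $E=\emptyset$ --- then $M=E$ is already a $D$-shallow hitting edge set and we are done, so we may assume $D>\euler r\ge r$.

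Consider the fractional vector $x\in[0,1]^E$ given by $x_e=r/D$ for every edge $e$ (well-defined since $D\ge r$). By regularity, $\sum_{e\in\incident(v)}x_e=D\cdot(r/D)=r$ for every $v\in V$. Regard this as a fractional point of the set system on ground set $E$ whose sets are $\{\incident(v):v\in V\}$, in which every element $e\in E$ lies in exactly $|e|=r$ sets. Beck--Fiala iterated rounding then produces $y\in\{0,1\}^E$ with $\bigl|\sum_{e\in\incident(v)}y_e-r\bigr|<r$ for every $v$; since $\sum_{e\in\incident(v)}y_e$ is a nonnegative integer lying strictly between $0$ and $2r$, it lies in $\{1,\dots,2r-1\}$. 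Hence $M=\{e\in E:y_e=1\}$ satisfies $1\le|M\cap\incident(v)|\le 2r-1$ for all $v$, as required. Multiedges cause no difficulty: an edge of multiplicity $\mu$ simply contributes $\mu$ identical columns, each still of column-sum $r$.

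I would include a brief recollection of why the rounding step delivers the \emph{strict} bound: keep the constraints $\sum_{e\in\incident(v)}x_e=r$ active as equalities, repeatedly move $x$ inside the affine subspace they span, freeze coordinates as they reach $\{0,1\}$, and drop a constraint once it has at most $r$ unfrozen coordinates. An incidence count between active constraints and unfrozen coordinates (each coordinate lies in $\le r$ constraints, each active constraint in $>r$ unfrozen coordinates) shows the process never stalls while unfrozen coordinates remain; and a dropped constraint --- at value exactly $r$ when dropped --- thereafter changes by strictly less than $1$ per remaining unfrozen coordinate, hence by strictly less than $r$ in total. The one point that must be gotten right is exactly this strict inequality $<r$ (rather than the off-the-shelf $<2r$), since it is what forces the rounded degree to stay $\ge 1$ and thus keeps $M$ hitting; this is the only place where $r$-uniformity (each column summing to exactly $r$) enters. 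I anticipate no real obstacle beyond pinning down this strictness.

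For completeness I would also remark on a second, fully probabilistic route that reproduces the bound in the stated form $\euler r(1+o(1))$: select each edge independently with probability $r/D$ and apply the Lovász Local Lemma to the events ``$v$ lies in no selected edge'' and ``$v$ lies in more than $\euler r(1+o(1))$ selected edges'', each of probability $\euler^{-(1+o(1))r}$ by a Poisson tail estimate. This route needs a preliminary degree-reduction step (an iterated random bipartition of $E$, kept balanced via the Local Lemma, so that the accumulated relative irregularity is $o(1)$) to bring the dependency degree $O(Dr)$ below the required threshold; hence the LP-rounding argument above is the cleaner one to write up.
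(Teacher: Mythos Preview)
This theorem is not proved in the present paper; it is quoted from the external reference~\cite{PU23} and used as a black box in the proof of \cref{thm:union-of-strips}. So there is no ``paper's own proof'' to compare against, and your write-up would function as a self-contained substitute for that citation.

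Your Beck--Fiala argument is correct. In the dual set system the elements are the edges of $H$, the sets are the stars $\incident(v)$, and $r$-uniformity means each element lies in exactly $r$ sets; this is precisely the column-sparsity hypothesis needed. The incidence count you give (active rows have $>r$ unfrozen columns, each column meets $\le r$ rows) indeed forces $|\text{active}|<|\text{unfrozen}|$, so the kernel is nontrivial and the walk never stalls. The strictness claim is the crucial point and you have it right: at the moment a row is dropped its $\le r$ unfrozen entries each sit in the open interval $(0,1)$, so each subsequently moves by strictly less than $1$ in absolute value, whence the row sum ends in the open interval $(0,2r)$ and therefore in $\{1,\dots,2r-1\}$. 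Multiedges are harmless for exactly the reason you state. One small inconsistency: your case split at $D\le \euler r$ (rather than at $D\le 2r-1$) means that in that branch you only get $t\le D\le \euler r$, not $2r-1$; so the argument as written yields $t(r)\le \euler r$ overall, not the advertised $2r-1$. Splitting instead at $D\le 2r-1$ fixes this and genuinely gives the sharper linear constant $2$. Either way the stated bound $t(r)=\euler r(1+o(1))$ follows.

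The form $\euler r(1+o(1))$ of the cited bound strongly suggests that~\cite{PU23} takes the probabilistic route you sketch second (independent sampling at rate $r/D$, Local Lemma on the under- and over-coverage events, after a degree-reduction preprocessing). Your LP-rounding proof is therefore a genuinely different --- and cleaner --- argument that in fact improves the constant; the probabilistic approach, on the other hand, is more robust to relaxing exact regularity or exact uniformity.
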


Here, $\euler = 2.71828\ldots$ denotes Euler's number.

Having~\cref{thm:shallow-hitting-edge-set}, we find shallow hitting sets for axis-aligned strips as follows.

\begin{theorem}
\label{thm:union-of-strips}
    Let $\Rst$ be the range family of all axis-aligned strips in $\Rbb^d$ and $m$ be a positive integer.
    Then, for every finite point set $V \subset \Rbb^d$, the hypergraph $\Hcal(V, \Rst, m)$ admits a $t(d)$-shallow hitting set, where $t(d) = 3 \euler d (1+o(1))$.
\end{theorem}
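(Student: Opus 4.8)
Our plan is to deduce the statement from \cref{thm:shallow-hitting-edge-set} by packaging the point set into an auxiliary $d$-uniform regular hypergraph whose vertices are ``blocks'' of consecutive points in each of the $d$ coordinate orders and whose hyperedges are the points of $V$. Recall that a set of $m$ points of $V$ forms a hyperedge of $\Hcal(V,\Rst,m)$ precisely when, in at least one of the $d$ coordinates, these points are consecutive in that coordinate's order; we may assume $|V| \geq m$, as otherwise $\Hcal(V,\Rst,m)$ is empty.

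Fix $m$ and set $s := \lfloor (m+1)/2 \rfloor$. First I would pad $V$ to $V^+ = V \cup D$ by adding $p := (-|V|) \bmod s$ dummy points whose coordinates in every dimension exceed all coordinates of $V$; then the dummies occupy the last $p$ positions in every order $\pi_i^+$, and $s \mid |V^+|$, so for each $i \in [d]$ the order $\pi_i^+$ partitions $V^+$ into consecutive blocks $B^i_1,\dots,B^i_N$ of size exactly $s$. Define the auxiliary hypergraph $G$ with vertex set $\{B^i_\ell : i \in [d],\ \ell \in [N]\}$ and, for each $v \in V^+$, a hyperedge $f_v := \{B^i_\ell : v \in B^i_\ell,\ i \in [d]\}$ consisting of the $d$ blocks (one per order) containing $v$. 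Then $G$ is $d$-uniform, and $s$-regular since the degree of a block vertex equals its size $s$; it may contain multiedges (when two points lie in the same block in all $d$ coordinates), which is harmless for \cref{thm:shallow-hitting-edge-set}.

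Now apply \cref{thm:shallow-hitting-edge-set} to $G$ to obtain a $t$-shallow hitting edge set $M$ with $t = \euler d (1+o(1))$, set $S^+ := \{v \in V^+ : f_v \in M\}$, and let $S := S^+ \cap V$ be obtained by discarding dummies. By construction $1 \leq |B^i_\ell \cap S^+| \leq t$ for every block. Take any hyperedge $e$ of $\Hcal(V,\Rst,m)$: it is a set of $m$ points consecutive in some $\pi_i$, hence also a window of $m$ consecutive positions in $\pi_i^+$ that avoids $D$ entirely (this is where ``dummies appended last'' is used). One checks that with $s = \lfloor(m+1)/2\rfloor$ such a window always fully contains at least one block and meets at most three blocks; moreover the only block meeting $D$ is the last one $B^i_N$, and $e$ cannot fully contain $B^i_N$ since $e \cap D = \emptyset$. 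Hence $e$ fully contains a block $B \subseteq V$, which contributes a genuine point of $S$, giving $|e \cap S| \geq 1$; and $|e \cap S| \leq |e \cap S^+| \leq 3t = 3\euler d (1+o(1))$ because $e$ meets at most three blocks, each contributing at most $t$ points of $S^+$. Thus $S$ is a $3\euler d(1+o(1))$-shallow hitting set of $\Hcal(V,\Rst,m)$, as claimed, with the $o(1)$ inherited from \cref{thm:shallow-hitting-edge-set} as $d \to \infty$.

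I expect the only delicate point to be the interaction between regularity and the boundary: \cref{thm:shallow-hitting-edge-set} forces all block vertices to have equal degree, hence $s \mid |V^+|$ and the dummy padding, and one must then guarantee that a dummy never occupies the unique full block of some real hyperedge — which is exactly what placing all dummies last in every order ensures. The choice $s \approx m/2$, rather than a smaller block size, is what pins the blow-up factor to $3$ instead of something larger.
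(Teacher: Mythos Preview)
Your proof is correct and follows essentially the same approach as the paper: you pad $V$ so that a block size $s \approx m/2$ divides $|V^+|$, build the auxiliary $d$-uniform $s$-regular hypergraph on blocks (which is exactly the dual $H^*$ the paper constructs from its $H'$), apply \cref{thm:shallow-hitting-edge-set}, and then use the same ``window of length $m$ contains one full block and meets at most three'' counting. The only cosmetic differences are that you take $s = \lfloor (m+1)/2 \rfloor$ where the paper takes $r = \lfloor m/2 \rfloor$, and that you describe the block hypergraph $G$ directly rather than first defining $H'$ on points and dualizing; you are also slightly more explicit than the paper about why the dummy block cannot be the unique fully contained block.
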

\begin{proof}
    Let $V$ be a finite set of points in $\Rbb^d$ of size $n$ and let $H = (V,E) = \Hcal(V,\Rst,m)$ be the corresponding $m$-uniform range capturing hypergraph induced by axis-aligned strips in $\Rbb^d$.
    We shall show that $H$ has a $t$-shallow hitting set, where $t = 3 \euler d (1+o(1))$.
    Let us set $r = \lfloor m/2 \rfloor$.
    We want to ensure that $n$ is a multiple of $r$.
    To this end, if $n = l \pmod r$ for some $l \neq 0$, then we add a set $A$ of $r-l$ new points, all of whose coordinates are larger than the coordinates in $V$.
    Observe that if $X'$ is a $t$-shallow hitting set in $\Hcal(V \cup A,\Rst,m)$, then $X = X' \cap V$ is a $t$-shallow hitting set in $H$, since every $e \in E$ is also a hyperedge in $\Hcal(V \cup A,\Rst,m)$.
    
    Thus, we may assume that $n = |V|$ and $r = \lfloor m/2 \rfloor$ divides $n$.
    For $i=1,\dots,d$, let $\pi_i \colon \{1,\dots, n\} \to V$ be the ordering of the points along the $i$-th coordinate axis.
    That is, $\pi_i(1) \in V$ is the point in $V$ with the lowest $i$-coordinate, $\pi_i(n) \in V$ is the point with the highest $i$-coordinate, and $\pi_i(1)_i < \cdots < \pi_i(n)_i$.
    Then, for each hyperedge $e$ in $\Hcal(V,\Rst^{i},m)$, the vertices in $e$ are $m$ consecutive elements in $\pi_i$.
    For $i=1,\dots,d$ and $j=0,\dots,n/r-1$, we define $W_{i,j}$ and $W_i$ to be
    \begin{align*}
        W_{i,j} &= \{ \pi_i(rj+1), \ldots, \pi_i( r(j+1) )\}
        \quad \text{and}\\
        W_i &= \{W_{i,j} \mid j=0,\dots,n/r-1\}~.
    \end{align*}
    In other words, each $W_i$ is a partition of the point set $V$ into $n/r$ parts of $r$ points with consecutive $i$-coordinates each.
    Thus, the hypergraph $H'=(V,E')$ with $E'=\bigcup_{i=1}^{d} W_i$ is $r$-uniform and $d$-regular.
    Let $H^*$ be the dual\footnote{%
    For a hypergraph $H = (V,E)$ its \emph{dual} is the hypergraph $H^*=(V^*,E^*)$ with vertex-set $V^*=E$ and edge-set $E^* = \{\incident(v) \mid v \in V\}$.
    Note that $H^*$ may have parallel hyperedges.%
    } hypergraph of $H'$.
    Then, $H^*$ is $d$-uniform and $r$-regular, with the hyperedges of $H^*$ corresponding to the vertices of $H'$, hence the points in $V$.
    By~\cref{thm:shallow-hitting-edge-set}, $H^*$ has a $t'$-shallow hitting edge set, where $t'=t'(d)=\euler d (1+o(1))$.
    Then, the corresponding set of vertices $X$ of $H'$ is a $t'$-shallow hitting set in $H'$.
    With $t = 3t'$, all that remains to show is that $X$ is a $3t'$-shallow hitting set in $H = \Hcal(V,\Rst,m)$.

    Let $e$ be any hyperedge in $H$.
    Since $|e|=m$, and since every hyperedge in $H'$ has size $r=\lfloor m/2 \rfloor$, there exists a hyperedge $e'$ in $H'$ with $e' \subseteq e$.
    Since $X$ is hitting in $H'$, it is also hitting in $H$.
    Moreover, for every hyperedge $e$ in $H$ we can find three hyperedges $e'_1,e'_2,e'_3$ in $H'$ with $e \subseteq e'_1 \cup e'_2 \cup e'_3$.
    Thus, since $X$ is $t'$-shallow in $H'$, it is $3t'$-shallow in $H$.
\end{proof}

Combining \cref{thm:union-of-strips} and \cref{lem:shallow-hitting-set}, we already obtain $m_d(k) \leq O(kd) \leq O_d(k)$, which beats the previous $O_d(k \log k)$ bound from~\cite{ACCIKLSST11}.
\begin{corollary}
    \label{cor:union-of-strips}
    For the range family $\Rst$ of all axis-aligned strips in $\Rbb^d$ and every integer $k \geq 2$ we have $m_{\Rst}(k) = m_d(k) \leq 3 \euler d (1+f(d)) \cdot k$ where $f(d) \to 0$ as $d \to \infty$.
\end{corollary}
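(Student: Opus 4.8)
The plan is to chain together the two ingredients already in place: the constant-shallow hitting sets of \cref{thm:union-of-strips} and the reduction of \cref{lem:shallow-hitting-set} from shallow hitting sets to polychromatic colorings. First I would check that the hypothesis of \cref{lem:shallow-hitting-set} is met, namely that $\Rst$ is shrinkable. This is immediate: if a hyperedge $e$ of $\Hcal(V,\Rst,m)$ is captured by a strip $R \in \Rst^i$, then shrinking $R$ along the $i$-th coordinate axis from one side until exactly one point of $V$ leaves $R$ produces a strip $R' \in \Rst^i$ with $R' \cap V = e' \subsetneq e$ and $|e'| = m-1$, so $e'$ is a hyperedge of $\Hcal(V,\Rst,m-1)$ contained in $e$.

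Then I would apply \cref{thm:union-of-strips}: for every positive integer $m$ and every finite $V \subset \Rbb^d$, the hypergraph $\Hcal(V,\Rst,m)$ admits a $t(d)$-shallow hitting set with $t(d) = 3\euler d(1+o(1))$, where the $o(1)$ term is a function of $d$ tending to $0$ as $d \to \infty$ (it is inherited from the $t(r) = \euler r(1+o(1))$ of \cref{thm:shallow-hitting-edge-set}). In particular $t(d) \geq 3\euler > 1$ for every $d \geq 2$. Feeding $t = t(d)$ into \cref{lem:shallow-hitting-set} gives, for all $k \geq 2$,
\[
    m_d(k) \;\leq\; t(d)\,(k-1) + 1 \;=\; t(d)\,k - \bigl(t(d) - 1\bigr) \;\leq\; t(d)\,k \;=\; 3\euler d\,(1+o(1))\cdot k .
\]
Writing $f(d)$ for the $o(1)$ function appearing here then yields exactly the claimed bound $m_{\Rst}(k) = m_d(k) \leq 3\euler d\,(1+f(d))\cdot k$ with $f(d) \to 0$ as $d \to \infty$.

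I expect no genuine obstacle in this corollary — all the real work lies in \cref{thm:shallow-hitting-edge-set} and in the partition-and-dualization argument of \cref{thm:union-of-strips}. The only things deserving a sentence of care are verifying shrinkability of $\Rst$ (done above) and the elementary bookkeeping that lets one replace the bound $t(k-1)+1$ of \cref{lem:shallow-hitting-set} by the cleaner $t(d)\,k$, which is valid precisely because $t(d) \geq 1$.
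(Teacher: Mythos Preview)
Your proposal is correct and follows exactly the paper's approach: the corollary is stated immediately after \cref{thm:union-of-strips} with only the one-line justification ``Combining \cref{thm:union-of-strips} and \cref{lem:shallow-hitting-set}'', and you have simply spelled out that combination (together with the easy shrinkability check and the $t(d)(k-1)+1 \leq t(d)k$ bookkeeping).
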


Using the following result of Bollob{\'a}s et al.~\cite{Bollob_s_2013}, we can even improve to $m_d(k) = O(k \log d)$.

\begin{theorem}[Bollob{\'{a}}s, Pritchard, Rothvoss and Scott~\cite{Bollob_s_2013}]
\label{thm:bollobas-polychromatic-edge-coloring}
    Every $r$-uniform $\Delta$-regular hypergraph (with possibly multiedges) has a polychromatic $k$-edge-coloring\footnote{a coloring of the hyperedges such that each vertex is incident to a hyperedge of every color} with $k \geq \Delta/(\ln r + O(\ln \ln r))$.
\end{theorem}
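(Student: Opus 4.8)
The plan is to prove the statement by the probabilistic method, using the Lov\'asz Local Lemma (LLL) on a uniformly random edge-coloring, in the same spirit as~\cref{thm:shallow-hitting-edge-set}. Given an $r$-uniform, $\Delta$-regular hypergraph $H=(V,E)$ (possibly with multiedges), I would color each hyperedge $e\in E$ independently and uniformly at random with one of $k$ colors, and for each vertex $v\in V$ consider the \emph{bad event} $B_v$ that some color of $[k]$ is absent among the $\Delta$ hyperedges incident to $v$; a polychromatic $k$-edge-coloring exists precisely when $\Pr\!\big[\bigcap_{v\in V}\overline{B_v}\big]>0$. A union bound over the $k$ colors gives $\Pr[B_v]\le k(1-1/k)^{\Delta}\le k\,\euler^{-\Delta/k}$, and $B_v$ is mutually independent of every $B_u$ for which no hyperedge contains both $u$ and $v$, so the dependency degree is at most $\Delta(r-1)\le\Delta r$.

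Then the symmetric LLL yields a polychromatic coloring as soon as $\euler\cdot k\,\euler^{-\Delta/k}\cdot(\Delta r+1)\le 1$, and solving for the largest feasible $k$ gives only $k\ge\Delta/\big(2\ln\Delta+\ln r+O(1)\big)$. This already proves a qualitative version, but carries an unwanted $\ln\Delta$ term: when $\Delta$ is astronomically larger than $r$, the denominator is governed by $\ln\Delta$ rather than $\ln r$. The $\ln\Delta$ enters twice, once through $\ln k\le\ln\Delta$ and once through $\ln(\text{dependency degree})\ge\ln\Delta$, and removing both is the real content of the theorem.

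To bring the denominator down to $\ln r+O(\ln\ln r)$, I would first reduce the degree to a mild function of $r$ alone. Randomly $s$-edge-color $H$ with $s=\lceil\Delta/D\rceil$ for a threshold $D=D(r)$ that is poly-logarithmic in $r$, giving color classes $H_1,\dots,H_s$, and then use a Chernoff bound together with another LLL application to balance all classes simultaneously, so that every $H_\ell$ has minimum degree $D'=D\,(1-o(1))$. Since $D'=\mathrm{polylog}(r)$, the dependency degree \emph{inside} each $H_\ell$ is $D'(r-1)=r\cdot\mathrm{polylog}(r)$, so the first-paragraph LLL applied to $H_\ell$ with a private palette produces a polychromatic $k'$-edge-coloring of $H_\ell$ with $k'\ge D'/(\ln r+O(\ln\ln r))$, because now $\ln k'$ and $\ln(\text{dependency degree})-\ln r$ are each only $O(\ln\ln r)$. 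Taking the union of these colorings over the $s$ classes with pairwise disjoint palettes gives a polychromatic coloring of $H$ with $s\cdot k'\ge\Delta/(\ln r+O(\ln\ln r))$ colors. The regime $\Delta=\mathrm{poly}(r)$ needs no partition and is handled directly, and multiedges cause no difficulty since the LLL always treats the members of the multiset $E$ as distinct objects.

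I expect the balancing step for enormous $\Delta$ to be the main obstacle: a single random partition only guarantees a deviation of roughly $\sqrt{D\ln\Delta}$, so one round gives $D'=D(1-o(1))$ only when $D\gg\ln\Delta$, which conflicts with wanting $D=\mathrm{polylog}(r)$. The remedy is to iterate the degree-reduction $O(\log^{\ast}\Delta)$ times, each round shrinking the degree by a polynomial factor and losing only a lower-order additive term, while calibrating $D$ and the per-round deviations so that the total accumulated slack stays within the claimed $O(\ln\ln r)$; pushing the Chernoff estimates to their limit and carrying the nested LLL hypotheses through all rounds is the delicate part. Alternatively, invoking the cluster-expansion strengthening of the LLL would shave the $\euler$-factor and various lower-order terms and might reduce the amount of iteration needed, but some form of the degree-reduction idea appears unavoidable.
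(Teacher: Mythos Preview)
The paper does not prove \cref{thm:bollobas-polychromatic-edge-coloring}; it is quoted verbatim as an external result of Bollob{\'a}s, Pritchard, Rothvoss and Scott~\cite{Bollob_s_2013} and is used as a black box in the proof of \cref{cor:union-of-strips-2}. There is therefore no ``paper's own proof'' to compare your attempt against.

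For what it is worth, your sketch is on the right track relative to the original source: the argument in~\cite{Bollob_s_2013} is exactly a Lov\'asz Local Lemma argument combined with an iterated random-partition degree-reduction to decouple the dependency degree from~$\Delta$, precisely as you outline. Your identification of the main obstacle --- that a single partitioning round only gives deviation $\sqrt{D\ln\Delta}$ and hence one must iterate to reach $D=\mathrm{polylog}(r)$ while controlling accumulated losses --- is the heart of their proof. So your proposal is essentially a correct reconstruction of the original argument, but it is not something the present paper attempts or needs to reprove.
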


\begin{corollary}
    \label{cor:union-of-strips-2}
    For the range family $\Rst$ of all axis-aligned strips in $\Rbb^d$ and every integer $k \geq 2$ we have $m_{\Rst}(k) = m_d(k) \leq 2 k (\ln d + O(\ln \ln d))$.
\end{corollary}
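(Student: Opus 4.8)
The plan is to derive \cref{cor:union-of-strips-2} by feeding the auxiliary hypergraph $H'$ constructed in the proof of \cref{thm:union-of-strips} into \cref{thm:bollobas-polychromatic-edge-coloring} instead of \cref{thm:shallow-hitting-edge-set}, and then translating a polychromatic edge-coloring of $H'$ into a polychromatic vertex-coloring of $\Hcal(V,\Rst,m)$. Fix a finite point set $V \subset \Rbb^d$ with $n = |V|$, fix $k \geq 2$, and set $m = 2k(\ln d + O(\ln\ln d))$ (the precise constant inside the $O(\cdot)$ is exactly the one appearing in \cref{thm:bollobas-polychromatic-edge-coloring}); we must show $\Hcal(V,\Rst,m)$ has a polychromatic $k$-coloring. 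As in the proof of \cref{thm:union-of-strips}, pad $V$ so that $r := \lfloor m/2 \rfloor$ divides $n$, take the $d$ coordinate orderings $\pi_1,\dots,\pi_d$, and form the partitions $W_i$ of $V$ into $n/r$ consecutive blocks of size $r$; then $H' = (V, \bigcup_{i=1}^d W_i)$ is $r$-uniform and $d$-regular. Applying \cref{thm:bollobas-polychromatic-edge-coloring} with $\Delta = d$ gives a polychromatic $k$-edge-coloring $\chi \colon E(H') \to [k]$ as soon as $d/(\ln r + O(\ln\ln r)) \geq k$, which holds by our choice of $m$ since $r = \lfloor m/2 \rfloor \geq k(\ln d + O(\ln\ln d)) \geq d$ for $d$ large (and small $d$ is covered by \cref{cor:union-of-strips} or the $d \leq 2$ facts listed earlier)—one checks $\ln r + O(\ln\ln r) \leq r/k$, i.e. $d/(\ln r + O(\ln\ln r)) \geq dk/r \geq k$.

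Next I would pass from the edge-coloring of $H'$ to a vertex-coloring of $V$. The dual trick: since $H'$ is $d$-regular, each vertex $v \in V$ lies in exactly $d$ blocks, one from each $W_i$; we simply recolor. Actually the cleaner route is to define $c(v)$ directly: since $\chi$ is a polychromatic edge-coloring of $H'$, every vertex $v$ is incident to a block of each color, but that does not immediately give a vertex coloring. Instead, consider the dual $H^*$ of $H'$, which is $d$-uniform and $r$-regular; a polychromatic $k$-edge-coloring of $H'$ is precisely a $k$-coloring of the vertex set of $H^*$ in which every hyperedge of $H^*$ — i.e. every $\incident(v)$, $v \in V$ — sees all $k$ colors. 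Equivalently, assign to each block $W_{i,j}$ the color $\chi(W_{i,j})$, and then for each point $v \in V$ define its color by looking at the $d$ blocks containing it. But we want one color per point. The correct translation is the standard one used to prove \cref{lem:shallow-hitting-set}-type statements: order the blocks within each $W_i$ and use the edge-coloring to assign colors to points so that within any window of $m \geq 2r$ consecutive points along axis $i$ we meet two full blocks $W_{i,j}, W_{i,j+1}$, and it suffices to guarantee each block carries a ``rainbow'' — so we refine: inside each block $W_{i,j}$, which has $r \geq k$ points, we color its points $1,2,\dots,k,1,2,\dots$ in $\pi_i$-order, independently per $W_i$. That gives each block of each $W_i$ all $k$ colors, hence every $m$-window along axis $i$ all $k$ colors — but this only uses $m \geq r = \lfloor m/2 \rfloor$, giving $m_d(k) \leq 2k$, which is false. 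So the blocks' colorings across the $d$ axes must be made consistent on the shared point set, and that is exactly what the polychromatic edge-coloring buys us.

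The resolution, and the main obstacle I anticipate, is correctly reconciling the $d$ partitions on the common ground set $V$. Here is the fix: for each axis $i$ and each block $W_{i,j}$, the edge-coloring $\chi$ assigns it one color $\chi(W_{i,j}) \in [k]$; think of this as ``block $W_{i,j}$ is responsible for color $\chi(W_{i,j})$.'' Because $\chi$ is a polychromatic edge-coloring of $H'$, each point $v$ is, over its $d$ blocks, responsible-for-membership in blocks of all $k$ colors — wait, that is a statement about $v$, not about windows. The actual argument in \cite{ACCIKLSST11}-style reductions is: define the vertex color $c(v)$ arbitrarily, then show each $\pi_i$-window of length $2r$ contains two consecutive blocks $W_{i,j},W_{i,j+1}$, and we only need \emph{one} of the points in $W_{i,j} \cup W_{i,j+1}$ of each color — but $|W_{i,j} \cup W_{i,j+1}| = 2r$ and we have $k$ colors with $2r \geq m \geq 2k$, so a greedy/Hall-type assignment could work if consistent across axes. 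Since making this fully precise is delicate, I expect the paper instead argues more cheaply: apply \cref{thm:bollobas-polychromatic-edge-coloring} to get that $H'$ has a $k$-edge-coloring, take the corresponding hyperedges of a single color class — they form a perfect-matching-like subfamily hitting each vertex — and iterate, or directly invoke the known equivalence ``polychromatic edge-coloring of the block hypergraph $\Rightarrow$ polychromatic vertex-coloring of $\Hcal(V,\Rst,m)$'' by assigning $c(v)$ to be the color of the \emph{unique} block of, say, $W_1$ containing $v$, and then checking that \cref{thm:bollobas-polychromatic-edge-coloring} applied to the refined $r$-uniform hypergraph whose hyperedges are all blocks from all $W_i$ forces every axis-$i$ window to be rainbow because such a window contains a whole block from every $W_{i'}$ and the edge-coloring is polychromatic at each vertex. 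The hard part, and the step I would spend the most care on, is this last reconciliation — verifying that a polychromatic \emph{edge}-coloring of the $d$-regular block hypergraph yields a genuine polychromatic \emph{vertex}-coloring with the window property, with the factor $2$ (not $1$) in $m = 2k(\ln d + O(\ln \ln d))$ coming from needing a window to contain a full block, i.e. $m \geq 2r$ rather than $m \geq r$.
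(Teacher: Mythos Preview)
There is a genuine gap: you apply \cref{thm:bollobas-polychromatic-edge-coloring} to the wrong hypergraph. You feed it $H'$, which is $r$-uniform and $d$-regular, and obtain a polychromatic $k$-\emph{edge}-coloring of $H'$, i.e.\ a coloring of the \emph{blocks} $W_{i,j}$ such that every point sees all $k$ colors among its $d$ incident blocks. As you then discover over three paragraphs, this is useless for producing a vertex-coloring of $V$, because different axes assign the same point to differently-colored blocks and there is no canonical way to reconcile them. Your numerical check also fails: you need $d/(\ln r + O(\ln\ln r)) \geq k$, and you claim this follows from $dk/r \geq k$, i.e.\ $d \geq r$; but $r \approx k\ln d$ is much larger than $d$ once $k$ is large.

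The paper's fix is a one-line change: apply \cref{thm:bollobas-polychromatic-edge-coloring} to the \emph{dual} $H^*$, which is $d$-uniform and $r$-regular. Now the theorem yields a polychromatic $k$-edge-coloring of $H^*$ whenever $r/(\ln d + O(\ln\ln d)) \geq k$, which is exactly the choice $r = \lceil k(\ln d + O(\ln\ln d))\rceil$. By duality, an edge-coloring of $H^*$ is literally a vertex-coloring of $H'$ (edges of $H^*$ are the points of $V$), and ``polychromatic'' for $H^*$ says precisely that every vertex of $H^*$---i.e.\ every block $W_{i,j}$---contains points of all $k$ colors. Since every $m$-window along any axis with $m = 2r$ contains a full block, the coloring is polychromatic on $\Hcal(V,\Rst,m)$. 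No cross-axis reconciliation is needed; the block structure already does the work. You actually write down $H^*$ and its parameters midway through your proposal, but then dualize in the wrong direction (edge-coloring of $H'$ $\leftrightarrow$ vertex-coloring of $H^*$) instead of the direction you need (edge-coloring of $H^*$ $\leftrightarrow$ vertex-coloring of $H'$).
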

\begin{proof}
    Let $V$ be a finite set of points in $\Rbb^d$ of size $n$.
    Let $r = \lceil k (\ln d + O(\ln \ln d)) \rceil $ and $m = 2r$.
    We show that the $m$-uniform range capturing hypergraph $H=\Hcal(V,\Rst,m)$ induced by axis-aligned strips in $\Rbb^d$ admits a polychromatic $k$-coloring.
    
    First, we construct the $r$-uniform $d$-regular hypergraph $H'$ as in the proof of \cref{thm:union-of-strips} and define its dual hypergraph to be $H^*$ (which is $d$-uniform and $r$-regular).
    By \cref{thm:bollobas-polychromatic-edge-coloring}, $H^*$ admits a polychromatic $k'$-edge-coloring with $k' \geq r/(\ln d + O(\ln \ln d)) \geq k$, i.e., every vertex of $H^*$ is incident to an edge of every color.
    Therefore, its dual $H'$ admits a polychromatic $k$-coloring $\psi$.

    It remains to show that $\psi$ is a polychromatic $k$-coloring of $H$.
    Let $e$ be any hyperedge in $H$.
    Since $|e|=m$ and since every hyperedge in $H'$ has size $r=m/2$, there exists a hyperedge $e'$ in $H'$ with $e' \subseteq e$.
    Since $e'$ is colored polychromatically, so is $e$.
\end{proof}

\subsection{Lower Bounds}

We seek to give a lower bound on $m_d(k) = m_{\Rst}(k)$ for the range family $\Rst$ of all axis-aligned strips in $\Rbb^d$.
That is, for every $d,k \geq 1$ we construct a point set $V = V_{d,k}$ in $\Rbb^d$ such that for some (hopefully large) $m$ the range capturing hypergraph $\Hcal(V,\Rst,m)$ admits no polychromatic $k$-coloring.
Then it follows that $m_d(k) \geq m+1$.

As a first step towards the desired point sets, we first present a construction of $r$-uniform $r$-partite\footnote{%
A hypergraph $H = (V,E)$ is \emph{$r$-partite} if there exists a partition $V = V_1 \dot\cup \cdots \dot\cup V_r$ such that for every $e \in E$ and every $i \in [r]$ we have $|e \cap V_i| \leq 1$.
The sets $V_1,\ldots,V_r$ are then called that \emph{parts} of $H$.%
} $t$-regular hypergraphs with $t$ being relatively large in terms of $r$, which admit no $(t-1)$-shallow hitting edge sets.

\begin{theorem} 
    \label{thm:simple-construction}
    Let $t \geq 2$ be an integer.
    There exists an $r$-uniform $r$-partite $t$-regular hypergraph with parts of size two that has no $(t-1)$-shallow hitting edge set, where $r = \binom{2t}{t}/2 \leq 4^t$, i.e., $t \geq \log_4(r)$.
\end{theorem}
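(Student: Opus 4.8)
The plan is to realize $H$ explicitly as the dual of a complete uniform hypergraph on a ground set of size $2t$, and then to use $t$-regularity to reduce the non-existence of a $(t-1)$-shallow hitting edge set to an elementary counting fact about $t$-subsets of a $2t$-element set.

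\emph{The construction.} Set $n=2t$ and let $K^{(t)}_{2t}$ be the complete $t$-uniform hypergraph on $[n]$, whose edges are all $\binom{2t}{t}$ $t$-subsets of $[n]$. I would take $H$ to be the dual of $K^{(t)}_{2t}$: its vertices are the $t$-subsets of $[n]$, its edges are the $2t$ elements of $[n]$, and a $t$-subset $S$ is incident (in $H$) to the element $j$ exactly when $j\in S$. Then every vertex $S$ has degree $|S|=t$, so $H$ is $t$-regular, and every edge $j$ is incident to the $\binom{2t-1}{t-1}=\binom{2t}{t}/2=:r$ many $t$-subsets containing $j$, so $H$ is $r$-uniform. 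Grouping the $t$-subsets of $[n]$ into the $r$ complementary pairs $\{S,[n]\setminus S\}$ (genuine pairs, since $n$ is even) partitions $V(H)$ into $r$ parts of size two; and as exactly one of $S,[n]\setminus S$ contains any given $j$, each edge of $H$ meets each part in exactly one vertex, so $H$ is $r$-partite with these parts. Finally $r=\binom{2t}{t}/2\le 2^{2t}/2<4^t$, hence $t\ge\log_4 r$.

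\emph{Reduction and contradiction.} Here I would use that $H$ is $t$-regular: a set $X$ of its edges is a $(t-1)$-shallow hitting edge set precisely when, at every vertex, $X$ contains at least one and omits at least one of the $t$ incident edges, i.e.\ precisely when both $X$ and $E(H)\setminus X$ are hitting edge sets. Under the identification of $E(H)$ with $[2t]$, write $X$ as a set $Y\subseteq[2t]$. Then $X$ hits the vertex $S$ iff $S\cap Y\neq\emptyset$, and $E(H)\setminus X$ hits $S$ iff $S\not\subseteq Y$. So a $(t-1)$-shallow hitting edge set would yield $Y\subseteq[2t]$ with
\[
S\cap Y\neq\emptyset\quad\text{and}\quad S\not\subseteq Y\qquad\text{for every }t\text{-subset }S\subseteq[2t].
\]
But the first property, applied to a $t$-subset contained in $[2t]\setminus Y$, forces $|[2t]\setminus Y|\le t-1$, i.e.\ $|Y|\ge t+1$; the second, applied to a $t$-subset contained in $Y$, forces $|Y|\le t-1$. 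These are incompatible, so no such $Y$ exists and $H$ has no $(t-1)$-shallow hitting edge set.

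\emph{Main obstacle.} Essentially all of the content lies in choosing the construction; once $H$ is taken to be the dual of $K^{(t)}_{2t}$, both the verification of its parameters and the impossibility of a $(t-1)$-shallow hitting edge set fall out of one-line facts about $t$-subsets of a $2t$-set. The one point I would spell out carefully is the equivalence, valid in any $t$-regular hypergraph, between a $(t-1)$-shallow hitting edge set and a partition of the edge set into two hitting edge sets; after that, the argument is just the observation that $[2t]$ cannot have a subset $Y$ that simultaneously meets every $t$-subset and contains no $t$-subset.
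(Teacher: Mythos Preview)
Your proposal is correct and follows essentially the same route as the paper: both take the dual of the complete $t$-uniform hypergraph on $[2t]$, verify the $r$-uniform, $r$-partite, $t$-regular structure via complementary pairs, and rule out a $(t-1)$-shallow hitting edge set by the elementary observation that no $Y\subseteq[2t]$ can simultaneously meet every $t$-subset and contain no $t$-subset. The only cosmetic difference is that the paper phrases the impossibility on the primal side (no $(t-1)$-shallow hitting \emph{vertex} set in $K^{(t)}_{2t}$) before dualizing, whereas you dualize first and translate back; the counting is identical.
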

\begin{proof}
    Let $H=(V,E)$ be the hypergraph with $V=\{1,\dots,2t\}$ and $E=\binom{V}{t}$, i.e., the hyperedges are all $t$-element subsets of $V$.
    Observe that $H$ is $t$-uniform, $r$-regular with $r=\binom{2t}{t}/2$.
    Moreover $H$ is the union of $r$ perfect matchings, each of the form $A,B \in \binom{V}{t}$ with $B = V - A$.
    
    First, we show that $H$ has no $(t-1)$-shallow hitting (vertex) set.
    To this end let $X \subseteq V$ be any set of vertices in $H$.
    If $|X| \leq t$, then $|V - X| \geq t$ and there exists a hyperedge $e \subseteq V - X$ which is not covered by $X$.
    In this case, $X$ is not hitting.
    If $|X| \geq t$, then there exists a hyperedge $e \subseteq X$.
    Since $e$ has size $t$, the set $X$ is not $(t-1)$-shallow.
    
    Now consider the dual hypergraph $H^*$ of $H$.
    Then, $H^*$ is an $r$-uniform $r$-partite $t$-regular hypergraph.
    Two vertices $v$ and $v'$ in $H^*$ (recall that $v,v'$ are $t$-subsets of $\{1,\dots,2t\}$) are in the same part if and only if $v'=\{1,\dots,2t\} - v$.
    Since $H$ has no $(t-1)$-shallow hitting (vertex) set, $H^*$ has no $(t-1)$-shallow hitting edge set.
\end{proof}

In the next theorem, we seek to find lower bounds for $m_d(k)$ for axis-aligned strips in $\Rbb^d$. 
For that, we use the constructions in \cref{thm:simple-construction}.
We reduce the problem of finding lower bounds for $m_d(k)$ to the problem of finding lower bounds of $m'_d(k)$, which is defined in the following.
We define $m'_d(k)$ to be the least integer $m'$ such that every $d$-uniform $d$-partite $m'$-regular hypergraph admits a \emph{polychromatic edge-coloring} with $k$ colors, that is, a coloring of the hyperedges such that each vertex is incident to a hyperedge of every color.
Note that in a $d$-uniform $d$-partite hypergraph every hyperedge uses exactly one vertex in each part.
If such a hypergraph is additionally regular, it follows that each part has the same size.

In order to obtain a lower bound on $m_d(k)$, we first show that $m_d(k) \geq m'_d(k)$, and afterwards prove lower bounds for $m'_d(k)$.

\begin{lemma}\label{lem:reduction-of-md}
    For every $d$ and $k$ we have $m_d(k) \geq m'_d(k)$.
\end{lemma}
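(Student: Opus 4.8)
We need to show that a lower bound construction for $m'_d(k)$ — a $d$-uniform $d$-partite $m'$-regular hypergraph with no polychromatic $k$-edge-coloring — can be realized (up to taking its dual) as the range capturing hypergraph of a point set with respect to axis-aligned strips in $\Rbb^d$. So the strategy is: start with such a "bad" hypergraph $G$ on $m' = m'_d(k) - 1$ (i.e.\ one witnessing that $m'-1$ regularity does not suffice), and build a point set $V \subset \Rbb^d$ together with an integer $m$ so that (i) $\Hcal(V,\Rst,m)$ has a subhypergraph isomorphic to the dual $G^*$ in a way that forces the coloring, and (ii) a polychromatic $k$-coloring of $\Hcal(V,\Rst,m)$ would yield a polychromatic $k$-edge-coloring of $G$, contradiction. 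This gives $m_d(k) \geq m+1 \geq m'_d(k)$.

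First I would recall the combinatorial reformulation of $m_d(k)$ already given in the paper: a point set in $\Rbb^d$ is equivalent to a ground set $V$ with $d$ linear orders (bijections $\pi_1,\dots,\pi_d$), and hyperedges of $\Hcal(V,\Rst,m)$ are the sets of $m$ consecutive elements in some $\pi_i$. So I need to design $d$ orderings of a common ground set. Let $G = (U, F)$ be a $d$-uniform $d$-partite $(m'-1)$-regular hypergraph with parts $U_1,\dots,U_d$ (each of equal size $N$, since $d$-uniform $d$-partite regular forces equal parts) that has no polychromatic $k$-edge-coloring. Its dual $G^* = (F, \{\incident(u) : u \in U\})$ is $(m'-1)$-uniform and $N$-regular, with $d$ "groups" of hyperedges coming from the $d$ parts, each group forming a perfect matching on $F$ (because within a fixed part $U_i$, each hyperedge of $G$ meets exactly one vertex of $U_i$, so the sets $\incident(u)$ for $u \in U_i$ partition $F$). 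The key point: I want the ground set of the point set to be $V = F$ (the hyperedge set of $G$), and I want the $i$-th ordering $\pi_i$ to break $F$ into consecutive blocks that are exactly the $N$ sets $\{\incident(u) : u \in U_i\}$, each block having size $m'-1$. If I then take $m = m'-1$, the hyperedges of $\Hcal(F,\Rst,m)$ that are "aligned to the blocks" are precisely the edges of $G^*$, and a polychromatic $k$-coloring of $\Hcal(F,\Rst,m)$ colors the elements of $F$ so that every $\incident(u)$ sees all $k$ colors — i.e.\ it is a polychromatic $k$-edge-coloring of $G$, contradiction. Hence $m_d(k) \geq m'-1+1 = m' = m'_d(k)$ (after being careful whether the bad example is at regularity $m'_d(k)$ or $m'_d(k)-1$; one uses the definition of $m'_d$ as a threshold and picks the witnessing instance accordingly).

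The technical heart is arranging the $d$ orderings so that the blocks of $\pi_i$ coincide with the matching $\{\incident(u):u\in U_i\}$ for every $i$ simultaneously, since these $d$ matchings on $F$ are prescribed and generally incompatible as "nested" structures. But this is in fact free: within each block we may order the $m'-1$ elements arbitrarily, and the blocks themselves may be ordered arbitrarily, so each $\pi_i$ is just "pick an order of the $N$ parts of the $i$-th matching, then pick an order inside each part" — no compatibility between different $i$ is needed because the orderings are independent. The only genuine worry is the \emph{extra} hyperedges: $\Hcal(F,\Rst,m)$ contains all $m$ consecutive elements in each $\pi_i$, not just the block-aligned ones, so a polychromatic coloring must also handle windows straddling two blocks. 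But that only makes the coloring requirement \emph{stronger}, which is exactly what we want for a lower bound — we are claiming \emph{no} polychromatic $k$-coloring exists, and if even the block-aligned sub-hypergraph $G^*$ forbids one, we are done. So no issue arises.

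**Main obstacle.** The one subtlety I expect to spend real care on is the off-by-one bookkeeping between "$m'_d(k)$ is the least $m'$ such that every $(m')$-regular instance is colorable" and "take a bad instance of regularity $m'_d(k)-1$, set $m = m'_d(k)-1$, conclude $m_d(k) \geq m'_d(k)$" — making sure the uniformity $m$ of the strip-hypergraph, the block size, and the regularity of $G$ all line up, and handling the degenerate case $m'_d(k) = 1$ (where the statement is trivial since $m_d(k) \geq 1$ always). Everything else — the duality dictionary, the independence of the $d$ orderings, and the observation that extra straddling hyperedges only strengthen the coloring constraint — is routine once set up.
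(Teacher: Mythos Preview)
Your proposal is correct and follows essentially the same approach as the paper: both realize the hyperedges of a $d$-uniform $d$-partite regular hypergraph as points, with the $i$-th linear order grouping edges into consecutive blocks according to their vertex in part $U_i$, so that a polychromatic vertex-coloring of the strip hypergraph yields a polychromatic edge-coloring of the original hypergraph. The only cosmetic difference is that the paper argues the direct implication (set $m=m_d(k)$ and show every $m$-regular instance is edge-colorable), while you argue the contrapositive (take a bad $(m'_d(k)-1)$-regular instance and show the resulting strip hypergraph is not colorable); your explicit remark about the ``extra'' straddling windows only strengthening the constraint is exactly the observation the paper uses implicitly when it restricts attention to the block-aligned windows $\incident(v_{i,j})$.
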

\begin{proof}
    Let $m = m_d(k)$.
    Then every range capturing hypergraph $H=\Hcal(V,\Rst,m)$ (with $V \subset \Rbb^d$ finite) admits a polychromatic $k$-coloring of its vertices.
    Let $H'=(V',E')$ be any $d$-uniform $d$-partite $m$-regular hypergraph with parts $V'_1,\dots,V'_d$ of size $n$ and $V'_i=\{v_{i,1},\dots,v_{i,n}\}$ for $i=1,\dots,d$.
    We deduce from $H'$ the following finite point set $V \subset \Rbb^d$, which defines the range capturing hypergraph $H=\Hcal(V, \Rst, m)$.
    For each part $V'_i$ of $H'$, let $\pi_i \colon E \to \{1,\dots,nm\}$ be a bijection that satisfies the following condition.
    For two hyperedges $e$ and $e'$ with $e \cap V'_i = \{v_{i,j}\}$ and $e' \cap V_i = \{v_{i,j'}\}$ and $j < j'$ it holds that $\pi_i(e) < \pi_i(e')$.
    Now, let the point set be $V=\{ (\pi_1(e),\dots,\pi_d(e)) \mid e \in E\} \subset \Rbb^d$.
    Note that, for every vertex $v_{i,j}$ in $V'_i$, its incident hyperedges $\incident(v_{i,j}) \subseteq E'$ correspond to points in $V$ that are consecutive in the $i$-th dimension.
    Recall that $H$ admits a polychromatic $k$-coloring of its vertices, i.e., each $m$-set of points that are consecutive in some dimension $i$ contains points of all $k$ colors.
    Then it follows that $H'$ admits a polychromatic $k$-coloring of its hyperedges.
\end{proof}

Having \cref{lem:reduction-of-md}, it remains to prove a lower bound on $m'_d(k)$.

\begin{theorem}
\label{thm:strips-lower-bound-1}
    $m_d(k) \geq m'_d(k) > \frac{1}{2} \left( \log_2 d - 1 \right) \cdot \lfloor k/2 \rfloor$.
\end{theorem}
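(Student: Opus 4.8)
\emph{Proof plan.}
By \cref{lem:reduction-of-md} it is enough to prove $m'_d(k)>\tfrac12(\log_2 d-1)\lfloor k/2\rfloor$, so I assume $k\ge2$ (for $k=1$ the claimed bound is negative) and set $q=\lfloor k/2\rfloor\ge1$. The plan is to exhibit, for a value $d_0$ as large as possible relative to $d$ and for $m\approx q\cdot\tfrac12\log_2 d$, a $d_0$-uniform $d_0$-partite $m$-regular hypergraph that admits \emph{no} polychromatic $k$-edge-coloring; padding this up will then give $m'_d(k)>m$ for all $d\ge d_0$. I would construct it by duality: build an $m$-uniform $d_0$-regular hypergraph $H$ whose edge set decomposes into $d_0$ perfect matchings and which has no polychromatic $k$-\emph{vertex}-coloring. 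Its dual $H^\ast$ is then $d_0$-uniform (as $H$ is $d_0$-regular), $d_0$-partite (the perfect matchings become the parts, each of size two), and $m$-regular (as $H$ is $m$-uniform), and a polychromatic $k$-edge-coloring of $H^\ast$ is precisely a polychromatic $k$-vertex-coloring of $H$.

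The hypergraph $H$ I have in mind is a ``blow-up'' of the one used in the proof of \cref{thm:simple-construction}. Let $\ell$ be the largest integer with $\binom{2\ell-1}{\ell-1}\le d$ (for $d\ge1$ this gives $\ell\ge1$), put $d_0=\binom{2\ell-1}{\ell-1}=\tfrac12\binom{2\ell}{\ell}$ and $m=\ell q$, and partition a ground set of $2m=2\ell q$ elements into $2\ell$ blocks $X_1,\dots,X_{2\ell}$ of size $q$. For each $I\in\binom{[2\ell]}{\ell}$ let $Y_I=\bigcup_{b\in I}X_b$, a set of $m$ elements, and let $H$ be the hypergraph on these $2m$ elements with edge set $\{Y_I : I\in\binom{[2\ell]}{\ell}\}$. (For $q=1$ this is exactly the hypergraph on $[2\ell]$ whose edges are all $\ell$-subsets, as in the proof of \cref{thm:simple-construction}.) Then $H$ is $m$-uniform; each $v\in X_b$ lies in exactly the $\binom{2\ell-1}{\ell-1}=d_0$ sets $Y_I$ with $b\in I$, so $H$ is $d_0$-regular; and since $Y_I$ and $Y_{[2\ell]\setminus I}$ partition the ground set, the $\binom{2\ell}{\ell}$ edges split into $\tfrac12\binom{2\ell}{\ell}=d_0$ perfect matchings. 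So $H^\ast$ has the required shape.

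It then remains to show $H$ has no polychromatic $k$-vertex-coloring. Suppose $\chi$ were one; then for \emph{every} $\ell$-subset $I$ the edge $Y_I=\bigcup_{b\in I}X_b$ would contain all $k$ colors. For a color $x\in[k]$ let $B_x\subseteq[2\ell]$ be the set of blocks in which $x$ occurs. If $|B_x|\le\ell$, then $[2\ell]\setminus B_x$ has at least $\ell$ elements, so it contains an $\ell$-subset $I$, and $Y_I$ misses $x$ — a contradiction. Hence $|B_x|\ge\ell+1$ for all $x\in[k]$, so
$k(\ell+1)\le\sum_{x\in[k]}|B_x|=\sum_{b=1}^{2\ell}|\chi(X_b)|\le 2\ell q\le 2\ell\cdot\tfrac k2=\ell k$,
which forces $k\le0$, impossible. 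Thus $m'_{d_0}(k)>m=\ell q$. For arbitrary $d\ge d_0$ I would pad $H^\ast$ by $d-d_0$ further parts of size two, splitting its $2m$ edges evenly between the two new vertices of each new part; this keeps the hypergraph $m$-regular, makes it $d$-uniform and $d$-partite, and cannot create a polychromatic $k$-edge-coloring (restricting one would colour $H^\ast$ polychromatically). Hence $m'_d(k)>\ell q$. Finally, maximality of $\ell$ gives $\binom{2\ell+1}{\ell}>d$, and $\binom{2\ell+1}{\ell}=\tfrac12\binom{2\ell+2}{\ell+1}\le\tfrac12\cdot4^{\ell+1}=2^{2\ell+1}$, so $d<2^{2\ell+1}$ and therefore $\ell>\tfrac12(\log_2 d-1)$. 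Combined with \cref{lem:reduction-of-md} this yields $m_d(k)\ge m'_d(k)>\ell\lfloor k/2\rfloor>\tfrac12(\log_2 d-1)\lfloor k/2\rfloor$.

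The main obstacle is to meet the three structural demands — $d_0$-uniform, $d_0$-partite \emph{and} regular — all at once while still forbidding every polychromatic $k$-coloring, since these requirements pull against one another. The key is that ``all unions of $\ell$ of the $2\ell$ blocks of size $\lfloor k/2\rfloor$'' is simultaneously a covering design rigid enough to defeat all $k$-colourings (through the counting above) and symmetric enough that its block-versus-subset incidence dualises to a genuine $d_0$-uniform $d_0$-partite hypergraph; the factor $\lfloor k/2\rfloor$ enters because the total colour budget $\sum_b|\chi(X_b)|\le 2\ell q$ then stays just below the required $(\ell+1)k$. A secondary point is the bookkeeping of the constant, so that the crude estimate $\binom{2\ell}{\ell}\le 4^\ell$ still delivers exactly the factor $\tfrac12(\log_2 d-1)$ rather than something slightly weaker.
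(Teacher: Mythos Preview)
Your proof is correct and essentially the same as the paper's: your vertex-blown-up hypergraph $H$ dualises to exactly the paper's construction (the hypergraph $H_0$ of \cref{thm:simple-construction} with each edge replaced by a multiedge of multiplicity $\lfloor k/2\rfloor$), and your inequality $k(\ell+1)\le 2\ell q\le \ell k$ is the same counting contradiction the paper reaches via $(t+1)k\le|E(H)|\le tk$. The only presentational differences are that you work on the primal side and spell out the padding from $d_0$ to $d$ parts explicitly, whereas the paper just writes $m'_d(k)\ge m'_{d_0}(k)$ without further comment.
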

\begin{proof}
    Let $k$ and $d$ be positive integers.
    Let $t$ be the largest integer such that $\binom{2t}{t} / 2 \leq d$.
    Let $d_0 = \binom{2t}{t} / 2 \leq 4^t / 2$ and observe that $d_0 \leq d \leq 4 d_0$.
    Let $H_0$ be the $d_0$-uniform $d_0$-partite $t$-regular hypergraph with two vertices per part from \cref{thm:simple-construction}.
    Observe that if $M$ is any subset of hyperedges in $H_0$ that together contain all vertices of $H_0$, called a \emph{hitting edge set}, then $M$ has size at least $t+1$.
    
    We construct the hypergraph $H$ by replacing each hyperedge of $H_0$ by a multiedge of multiplicity $\lfloor k/2 \rfloor$.
    Then, $H$ is a $d_0$-uniform $d_0$-partite $(t \lfloor k/2 \rfloor)$-regular hypergraph and each hitting edge set of $H$ has size at least $t+1$.
    Observe that $|E(H)| = 2 t \lfloor k/2 \rfloor \leq t k$, since each part of $H$ has size~$2$.
    
    Now assume for a contradiction that $H$ admits a polychromatic $k$-coloring of its hyperedges, i.e., a $k$-coloring of the hyperedges of $H$ such that every vertex is incident to at least one hyperedge of each color.
    Since each color class is a hitting edge set, each color class contains at least $t+1$ hyperedges.
    Thus, the number of hyperedges is $|E(H)| \geq (t+1) k > t k$, a contradiction.

    With $d_0 \leq d \leq 4 d_0$, we conclude:
    \begin{multline*}
         m'_d(k) \geq m'_{d_0}(k) 
        > t \left\lfloor \frac{k}{2} \right\rfloor
        \geq \frac{1}{2} \log_2(2 d_0) \left\lfloor \frac{k}{2} \right\rfloor
        \geq \frac{1}{2} \log_2(d / 2) \left\lfloor \frac{k}{2} \right\rfloor
        = \frac{1}{2} \left(\log_2 d - 1 \right) \left\lfloor \frac{k}{2} \right\rfloor
    \end{multline*}
\end{proof}

\begin{remark}
    In~\cite{PU23} there is a more sophisticated (compared to \cref{thm:simple-construction}) construction of $r$-uniform $r$-partite regular hypergraphs with two vertices per part that have no $(t-1)$-shallow hitting edge set with a slightly better bound for $t$, namely with $t = \log_2(r+1)$.
    Using this construction instead, an analogous proof as in \cref{thm:strips-lower-bound-1} then gives that $m_d(k) \geq m'_d(k) > \frac{1}{2}\left(\left(\log_2 d - 1\right) \cdot k - d\right)$, which is better by a factor of~$2$ as long as $k > d$.
\end{remark}

\section{Bottomless Rectangles}
\label{sec:bottomless}

For the range family $\Rbl$ of all bottomless rectangles in $\Rbb^2$, it is known that $m_{\Rbl}(k) = O(k)$~\cite{ACCCHHKLLMRU13}.

\begin{theorem}[Asinowski et al.~\cite{ACCCHHKLLMRU13}]{\ \\}
    \label{thm:bottomless-m(k)}
    For the range family $\Rbl$ of all bottomless rectangles in $\Rbb^2$ we have $m_{\Rbl}(k) \leq 3k-2$.
\end{theorem}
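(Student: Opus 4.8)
The plan is to recast the statement as a one-dimensional \emph{online} coloring problem and to solve that problem by a bottom-to-top sweep.

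First observe that a bottomless rectangle $[a,b]\times(-\infty,c]$ captures exactly the points of $V$ whose $x$-coordinate lies in $[a,b]$ and whose $y$-coordinate is at most $c$. Since $V$ is in general position, as $c$ ranges over $\Rbb$ the set $\{v\in V : y_v\le c\}$ ranges over all prefixes of $V$ in increasing $y$-order, and for each such prefix the captured sets are precisely the sets of consecutive points in the $x$-order restricted to that prefix. Hence a $k$-coloring of $V$ is polychromatic for $\Hcal(V,\Rbl,3k-2)$ if and only if the following holds: processing the points of $V$ one at a time in increasing $y$-order, after each point has taken its place in the $x$-order of the points seen so far, every $3k-2$ consecutive points in that order carry all $k$ colors. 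So it suffices to color the incoming points online and irrevocably, keeping every length-$(3k-2)$ window polychromatic at all times.

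I would do this by maintaining a structural invariant on the current point set read in $x$-order: it splits into consecutive blocks $B_0\mid B_1\mid\cdots\mid B_s$ in which every interior block $B_1,\dots,B_{s-1}$ is a \emph{rainbow} block of size exactly $k$ (one point of each color), while the two end blocks $B_0,B_s$ are partial rainbows of size at most $k-1$. A short case analysis then shows that this invariant alone forces the window property: an interval of $3k-2$ consecutive points, overlapping blocks $B_\ell,\dots,B_r$, contains $B_{\ell+1},\dots,B_{r-1}$ entirely, so either one of these middle blocks is interior --- whence the window is polychromatic --- or the window overlaps at most two blocks of total size at most $2k<3k-2$ for $k\ge 3$ (the two or three $k=2$ configurations being checked by hand). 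Inserting a new point $p$ is easy when it lands in an end block: color it with an absent color; and if $p$ completes an end block to size $k$, color it with the unique missing color, promoting that end block to a new interior block and leaving a fresh empty end block behind.

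The heart of the proof --- and the step I expect to be the main obstacle --- is an insertion of $p$ into an interior block $B$, which bloats $B$ to size $k+1$ and forces us to evict one point and let it ``cascade'' toward one of the two ends, through the interior blocks in between. An evicted point passes through each full rainbow block essentially unchanged, so the naive cascade fails exactly when the evicted color is already present in \emph{both} end blocks, and the only freedom one has is the choice of cascade direction together with the color of $p$. Resolving this is precisely where the slack of $3k-2$ is spent (recall that halfplanes only need $2k-1$): I would strengthen the invariant so that the color content near any prospective insertion point is balanced enough, and the end blocks carry enough extra room, that at least one of the two directions is always safe, and then check that the strengthened invariant is itself preserved by every insertion. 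Carrying out this bookkeeping so that the final bound comes out to the sharp value $3k-2$, rather than some larger constant multiple of $k$, is where the real work lies.
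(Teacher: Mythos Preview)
First, a meta-point: the paper does not give its own proof of this theorem. It is stated with attribution to Asinowski et~al.\ and immediately followed by the remark that ``the proof in~\cite{ACCCHHKLLMRU13} does not go via shallow hitting sets.'' So there is nothing in the present paper to compare your argument against line by line; the relevant benchmark is the original sweep-line argument of Asinowski et~al.

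Your reduction to a dynamic one-dimensional problem is correct and matches the standard setup. The genuine gap is in the cascade step, and it is more serious than you indicate. Your invariant requires the interior blocks $B_1,\dots,B_{s-1}$ to be \emph{contiguous $x$-intervals} of exactly $k$ points each (this is what makes the window argument go through). But then ``evicting a point'' from $B_i$ into $B_{i-1}$ can only mean shifting the boundary between them by one position, which moves a \emph{specific} point---namely the leftmost point of $B_i$---whose color is already fixed. Your sentence ``an evicted point passes through each full rainbow block essentially unchanged'' treats the blocks as multisets of colors through which a chosen color can be threaded; that picture is incompatible with the blocks being positional intervals of irrevocably colored points. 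After a single boundary shift, $B_{i-1}$ typically has two points of some color and none of another, and the cascade does not repair this. So the rainbow invariant is destroyed along the entire cascade path, and the promised ``strengthened invariant'' would have to be something quite different from what you describe; as written, the proposal does not contain the key idea.

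For contrast, the actual Asinowski et~al.\ argument is \emph{semi}-online rather than online: an arriving point is inserted uncolored, and a point gets colored only once a run of $k$ consecutive uncolored points appears. Delaying the coloring decision is exactly the extra degree of freedom that lets one pick, at the moment of coloring, both which point to color and which color to use so that the $3k-2$ window condition is maintained; committing to a color immediately on arrival, as you do, gives this freedom away. (The paper's own sweep-line in the proof of its shallow-hitting-set theorem is likewise semi-online, and even its direct-coloring variant in the subsequent remark only reaches $4k-3$.)
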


However, the proof in~\cite{ACCCHHKLLMRU13} does not go via shallow hitting sets, and it is also not clear how to adjust it to give shallow hitting sets.
In fact, Keszegh and P\'alv\"olgyi~\cite{KP19} ask whether there exists a constant $t$ such that for every $V$ the hypergraph $\Hcal(V,\Rbl,m)$ admits a $t$-shallow hitting set.
We answer this question in the positive.

\begin{theorem}
\label{thm:bottomless-shallow-hitting-set}
    Let $\Rbl$ be the range family of all bottomless rectangles in $\Rbb^2$ and $m$ be a positive integer.
    Then for any finite point set $V \subset \Rbb^2$ the hypergraph $\mathcal{H}(V,\Rbl,m)$ admits a $10$-shallow hitting set $X \subseteq V$.
\end{theorem}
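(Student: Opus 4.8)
The plan is to first translate the hyperedges of $\Hcal(V,\Rbl,m)$ into a one-dimensional, sweep-based description, and then to build the hitting set $X$ by a single bottom-to-top sweep. For a real number $c$, let $V_{\leq c}=\{v\in V\colon y(v)\leq c\}$, equipped with the left-to-right order by $x$-coordinate. The first step is the observation that a set $e$ with $|e|=m$ is a hyperedge if and only if, writing $p$ for the highest point of $e$, the set $e$ consists of $m$ consecutive points of $V_{\leq y(p)}$ in this left-to-right order, one of which is $p$: given a capturing bottomless rectangle one may push its top edge down to just above $y(p)$, which forces the captured set to be exactly the points of $V_{\leq y(p)}$ in the $x$-interval; conversely, any such window of $m$ consecutive points is captured by a sufficiently thin bottomless rectangle whose top edge sits just above $y(p)$. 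Hence producing a $t$-shallow hitting set is equivalent to choosing $X\subseteq V$ such that, for every point $p$, every window of $m$ consecutive points of $V_{\leq y(p)}$ that contains $p$ meets $X$ in at least one and at most $t$ points.

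Next I would construct $X$ by processing the points of $V$ in increasing order of $y$-coordinate, maintaining for the current prefix $V_{\leq c}$ its left-to-right order, the set of points already placed in $X$, and the \emph{gaps} they induce (the maximal runs of non-$X$-points, including the runs before the first and after the last $X$-point). Throughout the sweep I keep two invariants: every gap of the current prefix has fewer than $m$ points, which will yield the hitting property, and consecutive $X$-points are at $x$-distance bounded below by a fixed fraction of $m$ (roughly $m/2$ with the naive thresholds), so that $X$-points are never clustered, which will yield shallowness. When a newly inserted point enters a gap that thereby reaches $m$ points, I repair the first invariant by promoting to $X$ not the point that triggered the overflow but a point that roughly \emph{bisects} that run of $m$ points, so that the two new gaps each have size about half. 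A naive rule that simply promotes the triggering point does not work: an adversary can force repeated near-extreme subdivisions, creating arbitrarily long clusters of $X$-points and hence only $m$-shallowness, so the bisection is essential.

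The hitting property is then immediate from the reformulation: at the moment $p$ is processed, the hyperedges with top point $p$ are windows of $m$ consecutive points of the then-current prefix $V_{\leq y(p)}$, and such a window cannot avoid $X$ without being contained in a single gap of $m$ points. The main obstacle is the shallowness bound, and this is exactly where the constant $10$ is pinned down. For a hyperedge $e$ with top point $p$ we have $|e\cap X|=|e\cap(X\cap V_{\leq y(p)})|$, and $X\cap V_{\leq y(p)}$ splits into the points already in $X$ when $p$ was processed --- which, thanks to the bisection rule, are spread at $x$-distance a fixed fraction of $m$ inside $V_{\leq y(p)}$ and so meet the $m$-window $e$ only a small bounded number of times --- together with the points lying \emph{below} level $y(p)$ that are promoted only later in the sweep (these occur precisely because the bisecting point of an overflowing run need not be the point that triggered the overflow). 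The subtle part is to bound this second, ``retroactive'' contribution, and I would do so by a charging argument: every such late promotion is an $x$-median of a run of $m$ points in some prefix, and only a bounded number of such medians can fall inside a single $m$-window of a fixed prefix. Adding the two bounds and optimising the subdivision thresholds yields $|e\cap X|\leq 10$ for every hyperedge $e$, and hence the claimed $10$-shallow hitting set.
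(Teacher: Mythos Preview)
Your sweep-and-bisect framework and the hitting argument are both sound and match the paper's approach. The gap is exactly where you flag it: the retroactive contribution. Your charging claim---that only a bounded number of late medians can fall inside a fixed $m$-window $e$---is false for the algorithm as you describe it. An adversary can force \emph{every} point of $e$ into $X$: after $e$ is complete and one median $q\in e$ has been promoted, pick any gap $G$ bounded by $X$-points and still containing some $q_i\in e$; by inserting new points (all with $y$-coordinate above $e$) on either side of $q_i$ inside the $x$-range of $G$, you grow $G$ to size $m$ with $q_i$ sitting exactly at the median position, so $q_i$ is promoted. Repeating this one $q_i$ at a time drives $|e\cap X|$ up to $m$. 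Your spacing invariant (consecutive $X$-points at distance $\approx m/2$) holds only in the \emph{current} prefix and says nothing about distances measured in the earlier prefix $V_{\le y(p)}$, which is what controls $|e\cap X|$.

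The paper closes this gap with one extra ingredient you are missing: besides the black set $X_j$ it also maintains a \emph{white} set $W_j$ of points definitively excluded from $X$. Concretely, with $w=\lfloor(m+3)/4\rfloor$ it keeps a partition of $\Rbb$ into intervals $A_{j,i}$ so that each block $V_{j,i}$ has size in $[w,2w-1]$, exactly one black point, and exactly $w-1$ white points; when a block reaches size $2w$ it is halved, and in each new block of size $w$ \emph{all} points are immediately colored (one black, the rest white). This freezing is the point: the uncolored points of any fixed block $V_{j,i}$ survive only until the next split of that block, at which moment at most one of them becomes black and the rest become white, so $|X\cap V_{j,i}|\le 2$ forever. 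Shallowness then follows by a covering count ($m\le 4w$ and $|V_{j,i}|\ge w$, so at most five blocks meet $e$), giving $|e\cap X|\le 10$. Your bisection idea is the right skeleton, but without an exclusion mechanism no choice of thresholds bounds the retroactive promotions.
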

\begin{proof}
    Let $V \subset \Rbb^2$ be any finite point set and let $V =\{p_1, \dots, p_n\}$ with $y(p_1) < \dots < y(p_n)$.
    (Recall that $y(p)$ denotes the $y$-coordinate of a point $p \in \Rbb^2$.)
    Let $w = \lfloor (m+3) / 4 \rfloor$ and note that $4w-3 \leq m \leq 4w$.
    We can assume that $m > 10$ since for $m \leq 10$, the point set $X = V$ is a $10$-shallow hitting set of $\Hcal(V,\Rbl,m)$.
    Moreover, we can assume that $|V| \geq m \geq 4w-3$ since otherwise $\mathcal{H}(V,\Rbl,m)$ has no hyperedges.
    
    We shall perform a sweep-line algorithm that goes through the points in order of increasing $y$-coordinates and builds the desired $10$-shallow hitting set by selecting one by one points to be included in $X$, without ever revoking such decision.
    Such an algorithm is called \emph{semi-online} as its choices will be independent of the points above the current sweep-line (with larger $y$-coordinates).
    During the sweep we consider the $x$-coordinates of the points below the sweep-line.
    Note that if $m$ points have consecutive $x$-coordinates among those below the sweep-line, then these $m$ points form a hyperedge in $\Hcal(V,\Rbl,m)$, as verified by a bottomless rectangle whose top side lies on the sweep-line.
    And conversely, if some $m$ points of $V$ form a hyperedge in $\Hcal(V,\Rbl,m)$, then these have consecutive $x$-coordinates among those below the sweep-line at the time that the sweep-line contains the top side of a corresponding bottomless rectangle.
    
    We start the sweep-line algorithm with step $j = w$.
    In step $j$, $j \geq w$, we consider the points $V_j = \{p_1,\dots, p_j\}$, i.e., the $j$ points with the lowest $y$-coordinates.
    We construct a set $X_j \subseteq V_j$ of \emph{black points} (points that are definitely in the final set $X$) and a set of \emph{white points} $W_j \subseteq V_j$ (points that are definitely \emph{not} in the final set $X$) such that (for $j > w$) we have $X_{j-1} \subseteq X_j$ and $W_{j-1} \subseteq W_j$.
    We refer to the points that are neither white nor black as \emph{uncolored points}.
    Additionally, we maintain a partition $\Rbb = A_{j,1} \dot\cup \cdots \dot\cup A_{j,l}$ of the real line $\Rbb$ into $l$, for some $l$, pairwise disjoint intervals $A_{j,i}$ with $A_{j,1} = (-\infty,a_1), A_{j,2}=[a_1,a_2), \dots, A_{j,l} = [a_{l-1}, \infty)$ with $-\infty < a_1 < a_2 < \dots < a_{l-1} < \infty$.
    We define $V_{j,i}$ to be the set of points $p \in V_j$ with $x$-coordinate $x(p) \in A_{j,i}$. 
    While executing the sweep-line algorithm, we maintain the following invariants.
    \begin{itemize}
        \item Each $V_{j,i}$ contains exactly one black point and $w-1$ white points, i.e., $|V_{j,i} \cap X_j| = 1$ and $|V_{j,i} \cap W_j| = w-1$.
        \item Each $V_{j,i}$ has size $w \leq |V_{j,i}| \leq 2w-1$.
    \end{itemize}
    
    We start with step $j=w$ as follows.
    The set of black points is $X_w = \{p_1\}$, the set of white points is $W_w = \{p_2, \dots, p_w\}$ and $\Rbb = (-\infty, \infty)$ is the partition of $\Rbb$ into one set.
    Clearly, all conditions are satisfied.
    
    Now, suppose that $X_j$, $W_j$, and the partition $\Rbb = A_{j,1} \dot\cup \cdots \dot\cup A_{j,l}$ are given as the result of step $j$.
    In the next step $j+1$, we consider the set $V_{j+1} = V_j \cup \{p_{j+1}\}$.
    Let $A_{j,i'}$ be the interval with $x(p_{j+1}) \in A_{j,i'}$.
    We distinguish two cases.
    If $|V_{j,i}| < 2w-1$, then we set $X_{j+1}=X_j$, $W_{j+1} = W_j$ and $A_{j+1,i} = A_{j,i}$ for all $i=1,\dots,l$ for the next step $j+1$.
    Then, $|V_{j+1,i'}| \leq 2w-1$ and all conditions are again satisfied.
    Otherwise, assume that $|V_{j,i}| = 2w-1$.
    Let $q_1,\dots,q_{2w}$ be the points in $V_{j,i} \cup \{p_{j+1}\}$ ordered by their $x$-coordinate, i.e., $x(q_1)<\cdots<x(q_{2w})$, and define $a' = x(q_{w+1})$.
    Then, we define the partition 
    \begin{align*}
        \Rbb &= A_{j+1,1} \,\dot\cup\, \cdots \,\dot\cup\, A_{j+1,l+1}\\
            &= (-\infty, a_1) \,\dot\cup\, \cdots \,\dot\cup\, [a_{i'-1}, a') \,\dot\cup\, [a', a_{i'}) \,\dot\cup\, \cdots \,\dot\cup\, [a_{l-1}, \infty)~.
    \end{align*}
    That is, we split the interval $A_{j,i'}=[a_{i'-1},a_{i'})$ from step $j$ into two intervals $[a_{i'-1}, a')$ and $[a', a_{i'})$.
    Observe that $|V_{j+1,i'}| = w = |V_{j+1,i'+1}|$.
    Since there is exactly one black point in $V_{j,i'}$ (i.e., $|X_j \cap V_{j,i'}|=1$), there is exactly one black point of $X_j$ in $V_{j+1,i'} \cup V_{j+1,i'+1}$.
    By symmetry, assume that this black point is contained in $V_{j+1,i'}$ and therefore, $V_{j+1,i'+1}$ has no black point in $X_j$.
    Now we color all uncolored points in $V_{j+1,i'}$ white.
    Then, $V_{j+1,i'}$ contains exactly one black and $w-1$ white points.
    Since $V_{j,i'}$ has at most $w-1$ white points and $V_{j+1,i'+1} \subseteq V_{j,i'} \cup \{p_{j+1}\}$, the set $V_{j+1,i'+1}$ has at most $w-1$ white points of $W_j$, too.
    Thus, there exists an uncolored point $q$ in $V_{j+1,i'+1}$.
    We color $q$ black and all other uncolored points in $V_{j+1,i'+1}$ white.
    Then, $V_{j+1,i'+1}$ contains exactly one black and $w-1$ white points.
    This completes step $j+1$.
    Note that both invariants are again satisfied.

    After step $n = |V|$, we have considered all points in $V$.
    Let $X = X_n$ be the set of black points as result of the last step.
    We show that $X$ is a $10$-shallow hitting set of $\Hcal(V,\Rbl,m)$.
    
    \begin{claim}
        \label{claim:bottomless-hitting}
        $X$ is hitting in $\Hcal(V,\Rbl,m)$.
    \end{claim}
    \begin{claimproof}
        Let $R=[a,b] \times (-\infty,c]$ be a bottomless rectangle that contains $m$ points of $V$, i.e., $|R \cap V| = m$.
        Let $p$ be the topmost point in $R \cap V$ and consider the state of the sweep-line algorithm right after $p$ is inserted, that is, $p=p_j$ for some $j$ and step $j$ is finished.
        Then, the points in $V_j$ with $x$-coordinate in the interval $[a,b]$ are exactly the points in $R \cap V$.
        Since we have $|R \cap V| = m \geq 4w - 3$ and each $V_{j,i}$ has size at most $2w-1$, there exists a $V_{j,i'}$ with $V_{j,i'} \subseteq R \cap V$.
        Since $V_{j,i'}$ contains a black point ($V_{j,i'} \cap X \neq \emptyset$), $R \cap V$ contains a black point too ($R \cap X \neq \emptyset$) and $X$ is hitting.
    \end{claimproof}
    
    \begin{claim}
        \label{claim:shallowness-of-intervals}
        $|X \cap V_{j,i}| \leq 2$ for every $V_{j,i}$.
    \end{claim}
    \begin{claimproof}
        By the invariants above it holds that $|X_j \cap V_{j,i}| = 1$ (Be aware of the difference between $X \cap V_{j,i}$ and $X_j \cap V_{j,i}$.) and $|W_j \cap V_{j,i}| = w-1$.
        Moreover, observe that whenever an uncolored point $p \in V_{j,i}$ is colored black, all other uncolored points in $V_{j,i}$ are colored white.
        Since white points are definitely not contained in $X$, the claim follows.
    \end{claimproof}
    
    \begin{claim}
        \label{claim:bottomless-shallow}
        $X$ is $10$-shallow in $\Hcal(V,\Rbl,m)$.
    \end{claim}
    \begin{claimproof}
        Again, let $R$ be a bottomless rectangle that contains $m$ points of $V$, i.e., $|R \cap V| = m$, and let $p$ be the topmost point in $R \cap V$.
        Consider the state of the sweep-line algorithm after $p$ is inserted, that is, $p=p_j$ for some $j$ and step $j$ is finished.
        We have $|R \cap V| = m \leq 4w$ and each $V_{j,i}$ contains at least $w$ points.
        Therefore, there exist at most five sets $V_{j,i}$ with $V_{j,i} \cap R \neq \emptyset$.
        By \cref{claim:shallowness-of-intervals}, each $V_{j,i}$ contains at most two points of $X$.
        Therefore, $|R \cap X| \leq 5 \cdot 2 = 10$ and $X$ is $10$-shallow.
    \end{claimproof}
    
    By \cref{claim:bottomless-hitting,claim:bottomless-shallow}, $X$ is a $10$-shallow hitting set in $\Hcal(V,\Rbl,m)$.
\end{proof}

\begin{remark}
    The procedure in the proof of \cref{thm:bottomless-shallow-hitting-set} can be modified to directly get a polychromatic coloring of the range capturing hypergraph induced by bottomless rectangles.
    Let $w=\lfloor (m+3)/4 \rfloor$ be as in the proof of \cref{thm:bottomless-shallow-hitting-set}.
    Instead of carrying black and white sets, we carry a partial $w$-coloring (i.e., a $w$-coloring of some vertices on the sweep-line) such that in each step $j \geq 1$, every set $V_{j,i}$ of points contains every color exactly once.
    At the end of the algorithm, we get a partial $w$-coloring of all vertices.
    We complete this to a $w$-coloring by assigning colors to the uncolored vertices such that every $V_{n,i}$ contains every color at most twice.
    Note that every color class is a 10-shallow hitting set in $\Hcal(V,\Rbl,m)$.
    By setting $w=k$, one can observe that this $k$-coloring is polychromatic in $\Hcal(V,\Rbl,m)$, which gives a proof of $m_{\Rbl}(k) \leq 4k-3$.
    Moreover, if $e$ is an edge in $\Hcal(V,\Rbl)$, not necessarily of size $m$, and $n_1,n_2$ denote the size of two color classes in $e$ then it holds that $n_1 \leq 4+2 n_2 \leq 4(n_2+1)$.
    Therefore, this $k$-coloring is 4-balanced\footnote{We define and discuss $t$-balanced colorings in \cref{sec:conclusions}.} in $\Hcal(V,\Rbl)$.
\end{remark}

\section{Bottomless and Topless Rectangles}
\label{sec:bottomless-topless}

Chekan and Ueckerdt~\cite{chekan2022polychromatic} showed that $m_{\Rbl\cup\Rtl}(k) \leq O(k^{8.75})$ for the range family $\Rbl\cup\Rtl$ of bottomless and topless rectangles by a reduction to the family $\Rcal$ of all axis-aligned squares, and using that $m_\Rcal(k) = O(k^{8.75})$ in this case~\cite{AKV17}.
We improve the upper bound on $m(k)$ for the case $\Rbl\cup\Rtl$ to $O(k)$ in the following theorem, by a simple reduction to the case $\Rbl$ of just all bottomless rectangles, and the case $\Rtl$ of just all topless rectangles.
Observe that we clearly have $m_{\Rbl}(k) = m_{\Rtl}(k)$ for all $k$, and recall that $m_{\Rbl}(k) \leq 3k-2$ according to~\cite{ACCCHHKLLMRU13} (see \cref{thm:bottomless-m(k)}).

\begin{theorem}
    \label{thm:bottomless-topless-m(k)}
    For $\Rbl \cup \Rtl$ the range family of all bottomless and topless rectangles in $\Rbb^2$, we have $m_{\Rbl\cup\Rtl}(k) \leq 2 m_{\Rbl}(k) + 1 \leq 6k-3$.
\end{theorem}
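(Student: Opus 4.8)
The plan is to reduce a hyperedge of $\Hcal(V, \Rbl \cup \Rtl, m)$ to a pair of smaller hyperedges, one bottomless and one topless, so that a polychromatic coloring for the smaller uniformity propagates to the larger one. Set $m = 2m_{\Rbl}(k) + 1$ and let $V \subset \Rbb^2$ be any finite point set. I would take the coloring $c$ of $\Hcal(V, \Rbl, m_{\Rbl}(k))$ guaranteed by the definition of $m_{\Rbl}(k)$; since $m_{\Rbl}(k) = m_{\Rtl}(k)$ (vertical reflection is a bijection of the two range families) and the range capturing hypergraph on $V$ is the same up to relabeling, one needs the \emph{same} coloring $c$ to be simultaneously polychromatic for both $\Hcal(V,\Rbl,m_{\Rbl}(k))$ and $\Hcal(V,\Rtl,m_{\Rtl}(k))$. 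This is the point that needs a little care: I would argue it by noting that a polychromatic $k$-coloring is closed under isometries of the plane applied to the point set, so I should instead state the reduction as: find a coloring $c$ of $V$ that is polychromatic for $\Hcal(V,\Rbl,m_{\Rbl}(k))$, and \emph{separately} one $c'$ for $\Hcal(V,\Rtl,m_{\Rtl}(k))$ — but that won't combine. The clean fix is to observe that $\Rbl$-hyperedges of size $m_{\Rbl}(k)$ and $\Rtl$-hyperedges of size $m_{\Rbl}(k)$ on the same point set $V$ together form a range capturing hypergraph that, after a reflection trick, is still covered because any polychromatic coloring for the union family on uniformity $m_{\Rbl}(k)$ exists iff $m_{\Rbl\cup\Rtl}(k) \le m_{\Rbl}(k)$ — which is what we are trying to bound. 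So the actual argument must produce a single coloring without assuming the union bound.

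The correct route: take any coloring $c$ of $V$ that is polychromatic for $\Hcal(V, \Rbl, m_{\Rbl}(k))$ — this exists by definition. I claim the \emph{same} $c$ is polychromatic for $\Hcal(V, \Rtl, m_{\Rbl}(k))$ as well, because the proof of $m_{\Rbl}(k) \le 3k-2$ (or any such bound) in fact produces colorings via a sweep that is symmetric under $y \mapsto -y$; more robustly, I would simply invoke that for the \emph{point set} $V' = \{(x,-y) : (x,y) \in V\}$ we have a polychromatic coloring of $\Hcal(V', \Rbl, m_{\Rbl}(k))$, pull it back to $V$, and observe it is polychromatic for $\Hcal(V, \Rtl, m_{\Rbl}(k))$. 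These two colorings need not agree, so I would instead \textbf{interleave} at the level of colors: this does not directly work either. The genuinely clean approach is the one the authors almost surely intend — prove the bound $m_{\Rbl\cup\Rtl}(k) \le 2m_{\Rbl}(k)+1$ by showing every hyperedge of $\Hcal(V,\Rbl\cup\Rtl,m)$ with $m = 2m_{\Rbl}(k)+1$ \emph{contains} a hyperedge of $\Hcal(V,\Rbl,m_{\Rbl}(k))$ or of $\Hcal(V,\Rtl,m_{\Rbl}(k))$: a bottomless range of size $m$ contains a bottomless sub-range of size $m_{\Rbl}(k) < m$ by shrinkability, and symmetrically for topless; so a coloring polychromatic for $\Hcal(V,\Rbl,m_{\Rbl}(k)) \cup \Hcal(V,\Rtl,m_{\Rbl}(k))$ as a hypergraph suffices. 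Finally, such a coloring exists because $\Hcal(V, \Rbl, m_{\Rbl}(k)) \cup \Hcal(V, \Rtl, m_{\Rbl}(k))$ is polychromatically $k$-colorable: split $V$ by a horizontal sweep is \emph{not} needed — instead note this combined hypergraph is exactly $\Hcal(V, \Rbl\cup\Rtl, m_{\Rbl}(k))$ restricted to edges of size $m_{\Rbl}(k)$, and here is the crux.

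I would therefore structure it as follows. First, recall shrinkability of $\Rbl$ and $\Rtl$: any size-$m$ bottomless (resp.\ topless) hyperedge contains a size-$m'$ one for every $m' \le m$. Second, observe $m_{\Rbl}(k) = m_{\Rtl}(k)$ by the reflection $y \mapsto -y$. Third, the key combinatorial step: every hyperedge $e$ of $\Hcal(V,\Rbl\cup\Rtl,m)$ with $m = 2m_{\Rbl}(k)+1$ is captured by a bottomless or a topless range $R$ with $|R \cap V| = m$; by shrinkability, $e$ contains a sub-hyperedge $e'$ of size $m_{\Rbl}(k)$ of the \emph{same} type. So it suffices to $k$-color $V$ polychromatically with respect to all bottomless \emph{and} all topless hyperedges of size exactly $m_{\Rbl}(k)$ simultaneously — and here I would apply the \emph{definition of} $m_{\Rbl}(k)$ to a modified point set: place two copies of $V$, one reflected far below the other, so that bottomless ranges on the top copy and topless ranges on the bottom copy become bottomless ranges on the combined set; more simply, the authors likely observe that $2m_{\Rbl}(k)+1$ already absorbs the loss, so only \emph{one} of the two types of sub-hyperedge need be handled per edge, and a single coloring polychromatic for $\Hcal(V,\Rbl,m_{\Rbl}(k))$ handles the bottomless-type edges while its reflection handles topless-type — combined by taking the coloring on a doubled point set. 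The main obstacle is exactly this: producing one coloring good for both orientations at once; I expect the resolution to be a short point-set gadget (two vertically separated copies) reducing the union family to $\Rbl$ alone, after which $m_{\Rbl}(k) \le 3k-2$ from \cref{thm:bottomless-m(k)} yields $2m_{\Rbl}(k)+1 \le 6k-3$.
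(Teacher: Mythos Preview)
Your proposal does not reach a valid proof; the approaches you try are either circular or do not produce a single coloring on $V$. The shrinkability reduction---shrink a size-$m$ bottomless (or topless) hyperedge to size $m_{\Rbl}(k)$---makes no use of the factor $2$: it merely says that a polychromatic $k$-coloring of $\Hcal(V,\Rbl\cup\Rtl,m_{\Rbl}(k))$ would suffice, which is the very inequality $m_{\Rbl\cup\Rtl}(k)\le m_{\Rbl}(k)$ you are trying to bound (and is stronger than the theorem). Your ``doubled point set'' gadget does not work either: if you place $V$ above and a reflected copy $V'$ below, then a bottomless rectangle of height reaching into $V$ also swallows all of $V'$ in that $x$-range, so size-$m'$ bottomless hyperedges on $V$ are not size-$m'$ bottomless hyperedges on $V\cup V'$; at best you extract a coloring of $V'$ (hence a topless coloring of $V$) and a different, unrelated coloring of $V$, which you already observed cannot be combined.

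The paper's argument supplies the missing idea, and it is precisely where the factor $2$ enters. Order $V$ by $x$-coordinate and partition it into two sets by looking at consecutive pairs $\{p_{2i-1},p_{2i}\}$: put the lower point (smaller $y$) of each pair into $A$ and the higher into $B$. Now color $A$ polychromatically for $\Hcal(A,\Rbl,m')$ and $B$ polychromatically for $\Hcal(B,\Rtl,m')$, with $m'=m_{\Rbl}(k)=m_{\Rtl}(k)$; since $A$ and $B$ are disjoint this yields one $k$-coloring of $V$. The point is that any bottomless rectangle $R$ with $|R\cap V|=2m'+1$ contains at least $\lceil(2m'+1-2)/2\rceil=m'$ points of $A$: among the consecutive pairs entirely inside the $x$-range of $R$, the lower point of each pair lies in $R$ whenever any point of the pair does, so at least half (minus the two boundary pairs) of $R\cap V$ belongs to $A$. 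Hence $R\cap A$ contains a size-$m'$ bottomless hyperedge of $\Hcal(A,\Rbl,m')$ and is polychromatic. The symmetric argument handles topless rectangles via $B$. This pair-based partition is the key device you were missing; it is what allows two \emph{separate} colorings (one on $A$, one on $B$) to coexist on $V$ while each size-$m$ hyperedge is forced to contain enough points of the ``correct'' half.
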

\begin{proof}
    Let $m' = m_{\Rbl}(k) = m_{\Rtl}(k)$ and $m = 2 m' + 1$.
    Let $V = \{p_1,\dots,p_n\} \subset \Rbb^2$ be a finite point set with $x(p_1) < \cdots < x(p_n)$.
    (Recall that $x(p)$ denotes the $x$-coordinate of a point $p \in \Rbb^2$.)
    We partition the set $V$ into two sets $A$ and $B$.
    For each pair $\{p_{2i-1}, p_{2i}\}$, we put the vertex with the lower $y$-coordinate into set $A$ and the point with the larger $y$-coordinate into set $B$, see \cref{fig:topless-bottomless-m(k)}(a).

    \begin{figure}[htb]
        \centering
        \includegraphics{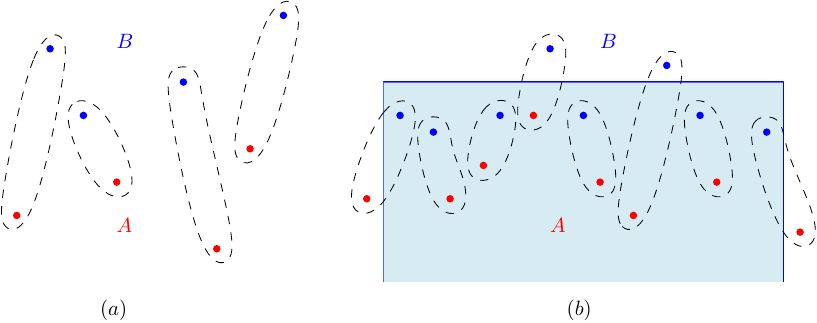}
        \caption{
            (a) For each pair $\{p_{2i-1}, p_{2i}\}$, the vertex with lower $y$-coordinate is in the set $A$ and the other vertex is in $B$. (b) If a bottomless rectangle contains $m$ points, then it contains at least $\lceil (m-2)/2 \rceil$ points of $A$.
        }
        \label{fig:topless-bottomless-m(k)}
    \end{figure}
    
    Consider a polychromatic $k$-coloring $c_1 \colon A \to \{1,\ldots,k\}$ of the hypergraph $\Hcal(A, \Rbl, m')$ and a polychromatic $k$-coloring $c_2 \colon B \to \{1,\ldots,k\}$ of the hypergraph $\Hcal(B, \Rtl, m')$.
    As $V = A \dot\cup B$, this naturally defines a $k$-coloring $c \colon V \to \{1,\ldots,k\}$ of $\Hcal(V, \Rbl \cup \Rtl, m)$.
    To see that coloring $c$ is polychromatic, let $e$ be a hyperedge in $\Hcal(V, \Rbl \cup \Rtl, m)$ induced by a bottomless or topless rectangle $R \in \Rbl \cup \Rtl$.
    If $R \in \Rbl$, then $R$ contains at least $\lceil (m - 2)/2 \rceil = \lceil (2m'-1)/2 \rceil = m'$ points from $A$, see \cref{fig:topless-bottomless-m(k)}(b).
    Thus, $e \cap A$ is colored polychromatically in $\Hcal(A, \Rbl, m')$ and hence $e$ is colored polychromatically in $\Hcal(V, \Rbl \cup \Rtl, m)$.
    Symmetrically, if $R \in \Rtl$, then $R$ contains at least $m'$ points from $B$, thus $R \cap B$ contains all $k$ colors under $c_2$, and thus $e = R \cap V \supseteq R \cap B$ contains all $k$ colors under $c$.
\end{proof}

According to \cref{thm:bottomless-topless-m(k)} we have $m_{\Rbl\cup \Rtl}(k) = O(k)$.
However, the proof relies on the polychromatic coloring from~\cite{ACCCHHKLLMRU13} and thus does not give shallow hitting sets, which (up to the constants) is the stronger statement.
In fact, even if we had a shallow hitting set $X$ for $\Hcal(A, \Rbl, m')$ and a shallow hitting set $Y$ for $\Hcal(B, \Rtl, m')$ ($A$ and $B$ as in the proof above), their union $X \cup Y$ would be hitting, but not necessarily shallow.

Recall that a subset $X$ of the vertices of a hypergraph $H = (V,E)$ is \emph{hitting} if $|X \cap e| \geq 1$ for every $e \in E$, and \emph{$t$-shallow} if $|X \cap e| \leq t$ for every $e \in E$.
In order to prove the existence of shallow hitting sets for $\Rbl \cup \Rtl$, we shall first find a shallow hitting set for $\Rbl$, which is also shallow (but not necessarily hitting) for $\Rtl$.
A similar approach has been done in~\cite{chekan2022polychromatic}.

\begin{lemma}
\label{lem:bottomless-topless-lemma}
    Let $V \subset \Rbb^2$ be a finite point set and $m$ be a positive integer.
    Then, there exists a set $X \subseteq V$ such that
    \begin{itemize}
        \itemsep0pt
        \item $X$ is a $14$-shallow hitting set of $\Hcal(V,\Rbl,m)$ and
        \item $X$ is a $7$-shallow set of $\Hcal(V,\Rtl,m)$.
    \end{itemize}
\end{lemma}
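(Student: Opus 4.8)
The plan is to adapt the sweep-line construction from the proof of \cref{thm:bottomless-shallow-hitting-set}, but to run the sweep in order of increasing $y$-coordinate while keeping the invariant structure flexible enough that the resulting set of black points is simultaneously shallow for bottomless \emph{and} for topless rectangles, and hitting for the bottomless ones. A bottomless rectangle with top side at the sweep-line captures a set of points that are consecutive in $x$ among those \emph{below} the line, whereas a topless rectangle with bottom side at the sweep-line captures points consecutive in $x$ among those \emph{above} the line. So the partition of $\Rbb$ into $x$-intervals that we maintain during the sweep directly controls bottomless edges ``from below''; to also control topless edges we need each interval to contain a bounded number of black points regardless of which suffix of the sweep we look at, which is exactly the role of \cref{claim:shallowness-of-intervals} in the earlier proof. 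The key observation is that an interval of the final partition, restricted to the points above any horizontal line, still has only $O(1)$ black points, since black points only ever get \emph{added} and each interval split introduces at most one new black point into each child.

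Concretely, I would set $w = \lfloor (m+3)/4\rfloor$ as before, so $4w-3 \le m \le 4w$, sweep upward maintaining a partition $\Rbb = A_{j,1}\dot\cup\cdots\dot\cup A_{j,l}$ with the invariants that each block $V_{j,i}$ has $w \le |V_{j,i}| \le 2w-1$ points, contains exactly one black point, and has the property that all currently uncolored points in a block become white the moment a black point is chosen in that block. Splitting a full block of $2w-1$ points into two halves of $w$ each, exactly as in the bottomless proof, preserves these invariants and ensures $|X \cap V_{j,i}| \le 2$ for every block ever created (\cref{claim:shallowness-of-intervals}). Hitting for $\Rbl$ is then verbatim the argument of \cref{claim:bottomless-hitting}: a bottomless rectangle capturing $m \ge 4w-3$ points, at the moment its topmost point is inserted, fully contains some block $V_{j,i}$, which contains a black point. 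Shallowness for $\Rbl$ is $\cref{claim:bottomless-shallow}$: such a rectangle meets at most five blocks, each contributing at most two black points, giving $10 \le 14$.

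For the topless part I would argue as follows. Let $R$ be a topless rectangle capturing $m$ points, and let $p = p_j$ be its \emph{bottommost} captured point; consider the state right after step $j$. The captured points of $R$ that lie in $V_j$ are $\{p_j\}$ together with points inserted earlier, and their $x$-coordinates lie in at most five blocks $V_{j,i}$ of the current partition (since $m \le 4w$ and each block has $\ge w$ points). As the sweep continues past step $j$, these five blocks may each be split repeatedly, but every black point ever placed in the region covered by these five blocks ends up in one of the descendant sub-blocks, and a single original block of $\le 2w-1$ points can be split into at most two pieces before each piece has exactly $w$ points and is never split again — so the total number of black points that can ever appear with $x$-coordinate inside one of the five blocks is at most $2$ per original block after the first split of each, i.e.\ bounded; a careful count gives at most $7$ across the five relevant blocks restricted to the points of $R$. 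Hence $|R \cap X| \le 7$, so $X$ is $7$-shallow for $\Rtl$.

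The main obstacle is precisely the bookkeeping in the last paragraph: one must show that although the $x$-interval partition keeps refining after step $j$, the black points accumulating inside the five blocks that matter for a given topless rectangle $R$ stay bounded — and in particular bounded by $7$ and not merely by $O(1)$ — when intersected with the $m$ points of $R$. The delicate point is that new black points created by later splits could in principle land in the same $x$-range as $R$; what saves us is that such a split only happens when a block accumulates $2w-1$ points, so within the $x$-range of $R$ (which holds only $O(w)$ points total, spread over five blocks) only a bounded number of splits — and hence a bounded number of new black points — can ever occur. Making the constant exactly $7$ (and $14$) rather than something larger is where the choice $w = \lfloor (m+3)/4\rfloor$ and the ``at most five blocks'' estimates have to be pushed through carefully; I expect this to be routine but fiddly.
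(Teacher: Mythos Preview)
Your topless-shallowness argument has a genuine gap. The claim that at step $j$ (when the bottommost point $p_j$ of $R=[a,b]\times[c,\infty)$ is inserted) the $x$-range $[a,b]$ meets at most five blocks is false: the partition at step $j$ is determined by the points $p_1,\ldots,p_j$, all of which have $y\le c$, and arbitrarily many of those can have $x$-coordinate in $[a,b]$ (they lie \emph{below} $R$, not in it). So $[a,b]$ may span any number of blocks at step $j$. Your follow-up claim that a block ``can be split into at most two pieces before each piece has exactly $w$ points and is never split again'' is also wrong: after a split each child starts at size $w$, but new insertions make it grow and split again, indefinitely. In particular the two boundary blocks straddling $a$ and $b$ can absorb unboundedly many points from outside $[a,b]$, split unboundedly often, and each split may place its new black point on a point of $R$. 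The heuristic that ``$R$'s $x$-range holds only $O(w)$ points total'' conflates points of $R$ with points of $V$ in that $x$-range.

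What the paper does --- and this is the missing idea --- is to modify the sweep so that in the \emph{final} partition every block retains at least $w$ uncolored points, and these are the topmost points of the block. Then if a black point lies in $R$, the $\ge w$ uncolored points above it in its final block also lie in $R$, and $|R|\le 6w+6$ forces at most $7$ such blocks. To achieve this the algorithm looks \emph{above} the sweep-line: an interval is declared ``dead'' once fewer than $2w+2$ uncolored points in all of $V$ (not just $V_j$) have $x$-coordinate in it, and uncolored points of dead intervals are never colored thereafter. Getting dead intervals to still satisfy the size invariants requires ``premarked split points'' chosen in advance and a coarser parameter $w=\lfloor (m-1)/6\rfloor$, which is why the bottomless constant rises to $14$. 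The plain semi-online algorithm of \cref{thm:bottomless-shallow-hitting-set} does not provide any buffer of high-$y$ non-black points above each black point, so it cannot control topless edges.
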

\begin{proof}
    Let $V=\{p_1, \dots, p_n\} \subset \Rbb^2$ be the finite point set with $y(p_1) < \cdots < y(p_n)$.
    Let $w = \lfloor (m-1)/6 \rfloor$ and observe that $6w + 1 \leq m \leq 6w+6$.
    We can assume that $m > 7$ since for $m \leq 7$, the point set $X = V$ is a $14$-shallow hitting set of $\Hcal(V,\Rbl,m)$ and $7$-shallow set of $\Hcal(V, \Rtl, m)$.
    Moreover, we can assume that $|V| \geq m \geq 6w+1$ since otherwise the range capturing hypergraphs contain no hyperedges.

    We perform a sweep-line algorithm to obtain the desired set $X$, similarly to the proof of \cref{thm:bottomless-shallow-hitting-set}.
    Again, we select one by one points below (or on) the current sweep-line to be included in $X$ and mark others to be not included, without ever revoking a decision.
    However, the algorithm will not be semi-online as its decisions shall also depend on points in $V$ above the current sweep-line.
    
    We start with step~$j=w$.
    In step $j$, $j \geq w$, we consider the points $V_j = \{p_1,\dots,p_j\}$ and construct a set $X_j \subseteq V_j$ of \emph{black points} (points that are definitely in the final set $X$) and a set $W_j \subseteq V_j$ of \emph{white points} (points that are definitely \emph{not} in the final set $X$) such that (for $j > w$) we have $X_{j-1} \subseteq X_j$ and $W_{j-1} \subseteq W_j$ for all $j$.
    We refer to the points that are neither white nor black as \emph{uncolored points}.
    Moreover, we maintain a partition $\Rbb = A_{j,1} \dot\cup \cdots \dot\cup A_{j,l}$ of the real line $\Rbb$ into $l$ pairwise disjoint intervals $A_{j,i}$, $i=1,\ldots,l$, with $A_{j,1}=(-\infty, a_1)$, $A_{j,2}=[a_1, a_2)$, $\dots$, $A_{j,l}=[a_{l-1}, \infty)$ with $-\infty < a_1 < a_2 < \cdots < a_{l-1} < \infty$.
    
    We define $V_{j,i}$ to be the set of points $p \in V_j$ with $x$-coordinate $x(p) \in A_{j,i}$ and $U_{j,i}$ to be the set of uncolored points in $V$ (i.e., with arbitrary $y$-coordinate) with $x$-coordinate $x(p) \in A_{j,i}$.
    Moreover, let us define $X_{j,i} = X_j \cap V_{j,i}$ and $W_{j,i} = W_j \cap V_{j,i}$ as the set of black and white points with $x$-coordinates in the interval $A_{j,i}$, respectively.
    An interval $A_{j,i}$ is said to be \emph{alive} if $|U_{j,i}| \geq 2w+2$, and \emph{dead} otherwise.
    We refer to the corresponding $V_{j,i}$ and $U_{j,i}$ as alive respectively dead.
    Additionally, we maintain for some intervals $A_{j,i}$ a \emph{premarked split point $s_{j,i} \in A_{j,i}$}, and for such intervals define $A_{j,i}^0 = [a_{i-1}, s_{j,i})$ and $A_{j,i}^1 = [s_{j,i}, a_i)$, and $V_{j,i}^t \; / \; X_{j,i}^t \; / \; W_{j,i}^t \; / \; U_{j,i}^t$ to be the set of points in $V_{j,i} \; / \; X_{j,i} \; / \; W_{j,i} \; / \; U_{j,i}$ with $x$-coordinate in $A_{j,i}^t$ ($t \in \{0,1\}$).

    \begin{figure}[htb]
        \centering
        \includegraphics{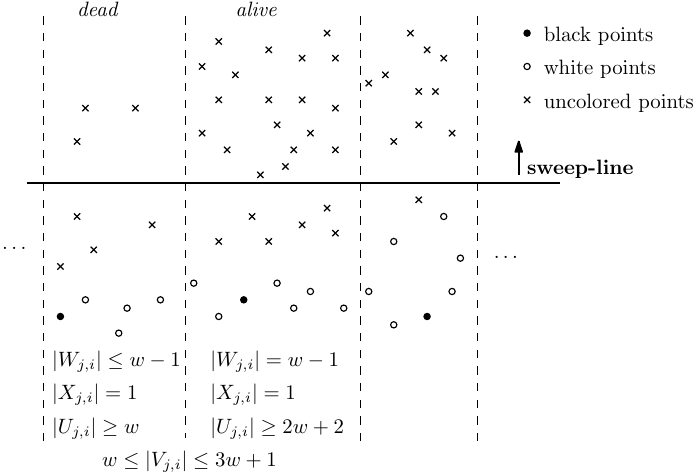}
        \caption{Sweep-line algorithm for $w=7$ after step $j$. The black points are the points in $X_j$, white points are the points in $W_j$ and uncolored points are the points in $U_j$.}
        \label{fig:topless-bottomless-sweepline}
    \end{figure}
    
    \begin{enumerate}[label = (I\arabic*)]
        \item[] We maintain that, after each step $j$, the following invariants to hold for all $i=1,\ldots,l$.
        \item Set $V_{j,i}$ has size $w \leq |V_{j,i}| \leq 3w+1$.\label{inv:1}
        \item The uncolored points in $V_{j,i}$ are the topmost points in $V_{j,i}$.\label{inv:2}
        \item[] \underline{If $A_{j,i}$ is dead, then:}
        \noindent
        \begin{enumerate}[label = (I\arabic*), start = 3]
            \item $V_{j,i}$ contains exactly one black point and at most $w-1$ white points, i.e., $|X_{j,i}| = 1$ and $|W_{j,i}| \leq w-1$.\label{inv:3}
            \item $U_{i,j}$ contains at least $w$ points, i.e., $|U_{j,i}| \geq w$.\label{inv:4}
            \item All points $p \in U_{i,j}$ remain uncolored in all steps $j' \geq j$.\label{inv:5}
        \end{enumerate}
        \item[] \underline{If $A_{j,i}$ is alive, then:}        
        \begin{enumerate}[label = (I\arabic*), start = 6]
            \item $V_{j,i}$ contains exactly one black point and exactly $w-1$ white points, i.e., $|X_{j,i}| = 1$ and $|W_{j,i}| = w-1$.\label{inv:6}
            \item If $|V_{j,i}| \geq 2w$, then there exists a premarked split point $s_{j,i} \in A_{j,i}$ that satisfies the following:
            There exists a $t \in \{0,1\}$ such that $|U_{j,i}^t|=w+1$ and $|V_{j,i}^t| \geq w+1$.\label{inv:7}
        \end{enumerate}        
    \end{enumerate}

    We start with step $j=w$ as follows.
    The set of black points is $X_w = \{p_1\}$, the set of white points is $W_w = \{p_2,\dots,p_w\}$ and $\Rbb = (-\infty, \infty)$ is the partition of $\Rbb$ into $l = 1$ interval.
    This way, all invariants~\ref{inv:1}--\ref{inv:7} are fulfilled, since $|V| \geq 6w+1$ and therefore $|U_{w,1}| \geq 5w+1$ and $A_{w,1}$ is alive.

    Now, suppose that $X_j$, $W_j$ and the partition $\Rbb=A_{j,1} \dot\cup \cdots \dot\cup A_{j,l}$ are given as the result of step $j$.
    In the next step $j+1$, we consider the points $V_{j+1} = V_j \cup \{p_{j+1}\}$.
    Let $A_{j,i'}=[a_{i'-1},a_{i'})$ be the set with $x(p_{j+1}) \in A_{j,i'}$.
    
    First, we construct the new partition $\Rbb = A_{j+1,1} \dot\cup \cdots \dot\cup A_{j+1,l'}$ and the premarked split points.
    We keep all intervals $A_{j,i}$ with $i \neq i'$, i.e., for each $A_{j,i}$ with $i \neq i'$ there exists an interval $A_{j+1,i''} = A_{j,i}$.
    We shall also keep all uncolored, black, and white points in all these intervals $A_{j,i}$ with $i \neq i'$, and thus invariants \ref{inv:1}--\ref{inv:7} are maintained here.
    
    The only changes to intervals and colors of points will be in the interval $A_{j,i'}$ containing the ``new'' point $p_{j+1}$ and the corresponding subset $V_{j,i'}$ of points in $V$.
    Depending on the following cases, we either keep the interval $A_{j,i'}$, find a premarked split point for $A_{j,i'}$, or replace $A_{j,i'}$ by two intervals, sometimes by cutting at its already existing premarked split point.
   
    \begin{enumerate}[label=(\alph*)]
        \item \textit{$A_{j,i'}$ is dead; or $A_{j,i'}$ is alive and $|V_{j,i'}| < 2w-1$.}

        In this case, we set $A_{j+1,i} = A_{j,i}$ for all $i$ and add no split points.
        
        \item \textit{$A_{j,i'}$ is alive and $|V_{j,i'}| = 2w-1$.}

        Let $V_{j,i'} \cup \{p_{j+1}\} = \{q_1,\dots,q_{2w}\}$ with $x(q_1) < \cdots < x(q_{2w})$ and let $b = x(q_w)$ and $b' = x(q_{w+1})$.
        In other words, $b$ and $b'$ are the $x$-coordinates of the two horizontally middle points in $V_{j,i'} \cup \{p_{j+1}\}$.
        We distinguish two cases.
        
        \textbf{Case 1:} There exists an $a \in (b,b']$ such that there are at least $w+1$ uncolored points in $V$ with $x$-coordinate in $A^0 := [a_{j,i'-1}, a)$ and at least $w+1$ uncolored points in $V$ with $x$-coordinate in $A^1 := [a, a_{j,i'})$.
        Then, define $A_{j+1,i'} = A^0$ and $A_{j+1, i'+1} = A^1$ (for such an $a$), i.e., we cut the interval $A_{j,i'}$ at $a$ and replace it by two new intervals. 

        \textbf{Case 2:} Otherwise, assume without loss of generality that for each $a \in [b,b')$ there are at most $w$ uncolored points in $V$ with $x$-coordinate in $[a_{j,i'-1},a)$.
        We set $A_{j+1,i} = A_{j,i}$ for all $i$ but add a premarked split point $s_{j,i'}$, which we define in the following.
        As $A_{j,i'}$ is alive, i.e., at least $2w+2$ uncolored points in $V$ have their $x$-coordinate in $A_{j,i'}$, it follows that there exists an $s \in (b',a_{j,i'}]$ such that there are exactly $w+1$ uncolored points in $V$ with $x$-coordinate in $A^0 := [a_{j,i'-1}, s)$.
        Observe that there are at least $w+1$ points in $V_{j+1,i'}$ with $x$-coordinate in $A^0$.
        Therefore, $s$ satisfies both conditions in \ref{inv:7} of being a split point, and we add $s$ as a premarked split point.

        \item \textit{$A_{j,i'}$ is alive and $|V_{j,i'}| > 2w-1$.}

        Then by~\ref{inv:7}, there exists a premarked split point $s_{j,i'} \in A_{j,i'}$ such that, without loss of generality, $|U_{j,i}^0| = w+1$ and $|V_{j,i}^0| \geq w+1$.
        
        \textbf{Case 1:} If $x(p_{j+1}) \in A_{j,i'}^0$ or $|V_{j,i'}^1 \cup \{p_{j+1}\}| < w$ then we set $A_{j+1,i} = A_{j,i}$ for all $i$.
        
        \textbf{Case 2:} If $x(p_{j+1}) \in A_{j,i'}^1$ and $|V_{j,i'}^1 \cup \{p_{j+1}\}| = w$, then we split the interval at the split point $s = s_{j,i'}$, i.e., define $A_{j+1,i'} := A_{j,i'}^0 = [a_{j,i'-1}, s)$ and $A_{j+1,i'+1} := A_{j,i'}^1 = [s,a_{j,i'})$.
        Observe that $|U_{j+1,i'}| = w+1 < 2w+2$, and hence $A_{j+1,i'}$ is dead but fulfills \ref{inv:4}.
        Also note that $|V_{j+1,i'+1}| = w$ and hence $A_{j+1,i'+1}$ fulfills \ref{inv:1}.
    \end{enumerate}
    Now, we construct the sets $X_{j+1}$ and $W_{j+1}$.
    If we do not split the interval $A_{j,i'}$, then we set $X_{j+1}=X_j$ and $W_{j+1}=W_j$.
    Note that this way, invariants \ref{inv:1}--\ref{inv:7} hold for $A_{j,i'}$ (and also for all other intervals).
    Otherwise, we have split the interval $A_{j,i'}$ into two intervals $A^0 = [a_{j,i'-1},a)$ and $A^1 = [a, a_{j,i'})$ for some $a$, as defined previously.
    For these intervals, we define $V^t \; / \; X^t \; / \; W^t \; / \; U^t$ to be the set of points in $V_{j,i'} \cup \{p_{j+1}\} \; / \; X_{j,i'} \; / \; W_{j,i'} \; / \; U_{j,i'}$ with $x$-coordinate in $A^t$ (for $t \in \{0,1\}$).
    Note that in this case $A_{j,i'}$ was alive and hence by \ref{inv:6} contained $w-1$ white points.
    Thus $|W^t| \leq w-1$ for $t = 0,1$.
    Let $t \in \{0,1\}$ be arbitrary.
    Then, one of the following cases occurs.
    \begin{itemize}
        \item[] \textbf{Case 1:} $|V^t| = w$ and $|U^t| \geq 2w+2$.

        That is, $V^t$ is alive.
        If there is no black point in $V^t$, then there exists an uncolored point $p$ in $V^t$, since $|W^t| \leq w-1$ and $|V^t| = w$.
        We color $p$ black and all other uncolored points in $V^t$ white.
        Otherwise, if there is a black point in $V^t$, then we color all uncolored points in $V^t$ white.
        As $|V^t| = w$, it follows that \ref{inv:6} holds.
        
        \item[]
        \textbf{Case 2:} $|V^t| = w$ and $w+1 \leq |U^t| < 2w+2$, or

        \textbf{Case 3:} $|V^t| > w$ and $|U^t| = w+1$.

        If there is no black point in $V^t$, then there exists an uncolored point in $V^t$, as $|V^t| \geq w$ and $|W^t| \leq w-1$.
        Let $p$ be the bottommost uncolored point in $V^t$.
        We color $p$ black but leave all other uncolored points uncolored.
        Observe that this interval $A^t$ is dead, as $|U^t| < 2w+2$, and satisfies invariants \ref{inv:3} and \ref{inv:4}.
        In fact, we still have at least $w$ uncolored points as required by \ref{inv:4}, since $|U^t| \geq w+1$ and we only colored one point black.
    \end{itemize}
    
    This completes step $j+1$.
    After step $n = |V|$, we have considered all points in $V$.
    We show that $X := X_n$ is a $14$-shallow hitting set in $\Hcal(V,\Rbl,m)$ and a $7$-shallow set in $\Hcal(V,\Rtl,m)$.

    \begin{claim}
    \label{claim:bottomless-hitting-2}
        $X$ is hitting in $\Hcal(V, \Rbl, m)$.
    \end{claim}
    \begin{claimproof}
        Let $R = [a,b] \times (-\infty,c]$ be a bottomless rectangle with $|R \cap V| = m$.
        Then, there exists a $j$ such that the points in $V_j$ with $x$-coordinate in $[a,b]$ are exactly the points in $R \cap V$.
        We have $|R \cap V| = m \geq 6w+1$, and $|V_{j,i}| \leq 3w+1$ for each $V_{j,i}$ by~\ref{inv:1}.
        Therefore, there exists a $V_{j,i'}$ with $V_{j,i'} \subseteq R \cap V$.
        Since $V_{j,i'}$ contains at least one black point by \ref{inv:3} and \ref{inv:6}, $R \cap V$ contains a black point too and $X$ is hitting.
    \end{claimproof}

    \begin{claim}
    \label{claim:shallowness-of-intervals-2}
        $|X \cap V_{j,i}| \leq 2$ for every $V_{j,i}$, and $|X \cap V_{j,i}| = 1$ if $|V_{j,i}|=w$.
    \end{claim}
    \begin{claimproof}
        If $V_{j,i}$ is dead, then by \ref{inv:6} we have $|X_{j,i}| = 1$ and by \ref{inv:5} all uncolored points in $V_{j,i}$ remain uncolored.
        Thus $|X \cap V_{j,i}| = 1$ in this case.
        
        If $V_{j,i}$ is alive, then by \ref{inv:3} we have $|X_{j,i}| = 1$ and $|W_{j,i}| = w-1$.
        Therefore, if $|V_{j,i}|=w$, then $|X \cap V_{j,i}| = 1$.
        Moreover, whenever an uncolored point $q \in V_{j,i}$ is colored black, then every other uncolored point $p$ in $V_{j,i}$ is either colored white (if $p$ lies in an interval that is alive in step $j+1$) or remains uncolored throughout the algorithm (if $p$ lies in an interval that is dead in step $j+1$).
        Therefore, $|X \cap V_{j,i}| \leq 2$ for every $V_{j,i}$.
    \end{claimproof}

    \begin{claim}
    \label{claim:bottomless-shallow-2}
        $X$ is $14$-shallow in $\Hcal(V, \Rbl, m)$.
    \end{claim}
    \begin{claimproof}
        Let $R$ be a bottomless rectangle that contains $m$ points of $V$, i.e., $|R \cap V| = m$ and let $p=p_j$ be the topmost point in $R \cap V$ (for some $j$).
        We consider the sweep-line algorithm right after step $j$.
        We have $|R \cap V| = m \leq 6w+6$ and each $V_{j,i}$ contains at least $w$ points.
        Each $V_{j,i}$ contains at most two points of $X$ by \cref{claim:shallowness-of-intervals-2}, and exactly one point of $X$ if $|V_{j,i}|=w$.
        Observe that in worst case, every $V_{j,i}$ that intersects $R$ has size $w+1$ and two points in $X$.
        Then, $R$ intersects at most $7$ such sets $V_{j,i}$ and therefore, $|R \cap X| \leq 7 \cdot 2 = 14$ and $X$ is $14$-shallow.
    \end{claimproof}

    \begin{figure}[htb]
        \centering
        \includegraphics[width=\textwidth]{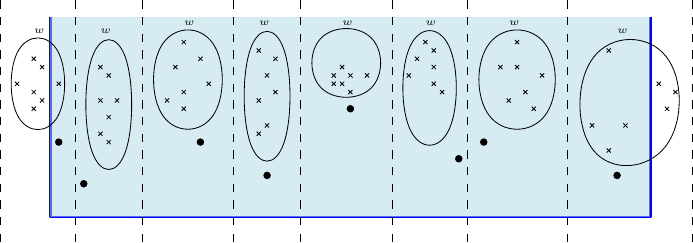}
        \caption{If a topless rectangle contains at least $8$ points of $X$, then it has size at least $2+6(w+1) > 6w+6 \geq m$.}
        \label{fig:topless-shallow}
    \end{figure}
    
     \begin{claim}
     \label{claim:topless-shallow}
         $X$ is $7$-shallow in $\Hcal(V, \Rtl, m)$.
     \end{claim}
     \begin{claimproof}
        Consider the sweep-line algorithm after the last step $j=n$.
        By invariants \ref{inv:3} and \ref{inv:6}, each $V_{n,i}$ has exactly one point in $X = X_n$.
        By invariants \ref{inv:4} and \ref{inv:6}, each $V_{n,i}$ has at least $w$ uncolored points, which by \ref{inv:2} are the topmost points in $V_{n,i}$.
        See \cref{fig:topless-shallow}.
        Consider a topless rectangle $R = [a,b] \times [c, \infty)$ of size $|R \cap V| = m \leq 6w+6$.
        If an interval $A_{n,i}$ satisfies $A_{n,i} \subseteq [a,b]$ and the black point $p \in X_{n,i}$ is contained in $R$, then there are at least $w$ uncolored points in $R \cap V_{n,i}$.
        Moreover, there are at most two intervals $A_{n,i}$ with $\emptyset \neq A_{n,i} \cap [a,b] \subsetneq A_{n,i}$ and therefore at most two sets $V_{n,i}$ with $\emptyset \neq V_{n,i} \cap R \subsetneq V_{n,i}$.
        Assume that $|R \cap X| \geq 8$, then $|R \cap V| \geq (8-2) \cdot (w+1) + 2 > 6w+6 \geq m$, see \cref{fig:topless-shallow}.
        This is the desired contradiction.
     \end{claimproof}

     By \cref{claim:bottomless-hitting-2,claim:bottomless-shallow-2,claim:topless-shallow}, $X$ is a $14$-shallow hitting set in $\Hcal(V,\Rbl,m)$ and a $7$-shallow set in $\Hcal(V,\Rtl,m)$, which concludes the proof.
\end{proof}

Having \cref{lem:bottomless-topless-lemma} in place, we can quickly derive the full theorem.

\begin{theorem}
\label{thm:bottomless-topless-shallow-hitting-set}
    Let $\Rbl \cup \Rtl$ be the range family of all bottomless and topless rectangles in $\Rbb^2$ and $m$ be a positive integer.
    Then for any finite point set $V \subset \Rbb^2$ the hypergraph $\Hcal(V,\Rbl \cup \Rtl,m)$ admits a $21$-shallow hitting set $X \subseteq V$.
\end{theorem}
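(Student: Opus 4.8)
The plan is to combine \cref{lem:bottomless-topless-lemma} with its mirror image. First I would apply \cref{lem:bottomless-topless-lemma} to $V$ directly, obtaining a set $X \subseteq V$ that is a $14$-shallow hitting set of $\Hcal(V,\Rbl,m)$ and a $7$-shallow set of $\Hcal(V,\Rtl,m)$. Next I would reflect the plane through a horizontal line: this is an affine bijection $\sigma$ of $\Rbb^2$ that maps the family $\Rbl$ bijectively onto $\Rtl$ and vice versa, while sending a point set in general position to a point set in general position. Applying \cref{lem:bottomless-topless-lemma} to $\sigma(V)$ and pulling the resulting set back through $\sigma^{-1}$ yields a set $Y \subseteq V$ that is a $14$-shallow hitting set of $\Hcal(V,\Rtl,m)$ and a $7$-shallow set of $\Hcal(V,\Rbl,m)$, since $\Hcal(V,\Rbl,m)$ is carried by $\sigma$ exactly to $\Hcal(\sigma(V),\Rtl,m)$.

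Then I would take $Z = X \cup Y$ and verify it is the desired $21$-shallow hitting set of $\Hcal(V,\Rbl\cup\Rtl,m)$. For the hitting property: every hyperedge of $\Hcal(V,\Rbl\cup\Rtl,m)$ is induced either by a bottomless rectangle, hence hit by $X$, or by a topless rectangle, hence hit by $Y$; in either case it meets $Z$. For shallowness, let $e$ be a hyperedge induced by a bottomless rectangle; then $|e \cap Z| \leq |e \cap X| + |e \cap Y| \leq 14 + 7 = 21$, using that $X$ is $14$-shallow and $Y$ is $7$-shallow with respect to $\Rbl$. Symmetrically, if $e$ is induced by a topless rectangle, then $|e \cap Z| \leq |e \cap X| + |e \cap Y| \leq 7 + 14 = 21$. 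Thus $1 \leq |e \cap Z| \leq 21$ for every hyperedge $e$, as required.

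There is essentially no obstacle here: all the difficulty is concentrated in \cref{lem:bottomless-topless-lemma}, whose sweep-line construction produces a single set that is simultaneously shallow-hitting for one family and shallow for the other. The only point requiring care in the present proof is that the horizontal reflection genuinely preserves the class of range capturing hypergraphs, so that the shallow hitting set obtained for the reflected instance can be transported back to $V$; this is immediate since $\sigma$ is an affine bijection exchanging $\Rbl$ and $\Rtl$. Everything else reduces to the elementary inclusion bound $|e \cap (X \cup Y)| \leq |e \cap X| + |e \cap Y|$ applied separately to bottomless and to topless rectangles.
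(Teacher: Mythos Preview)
Your proof is correct and follows essentially the same approach as the paper: obtain one set from \cref{lem:bottomless-topless-lemma}, obtain a second set by the symmetric (reflected) application, and take their union, bounding $|e\cap(X\cup Y)|\le 14+7=21$ for each hyperedge. The paper states the symmetric step in one word (``symmetrically'') and omits the verification, but your explicit use of the horizontal reflection $\sigma$ exchanging $\Rbl$ and $\Rtl$ is exactly what justifies that word.
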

\begin{proof}
    By \cref{lem:bottomless-topless-lemma}, there exists a set $Y$ that is a $14$-shallow hitting set of $\Hcal(V,\Rbl,m)$ and a $7$-shallow set in $\Hcal(V,\Rtl,m)$.
    Symmetrically, there exists a set $Z$ that is a $14$-shallow hitting set of $\Hcal(V,\Rtl,m)$ and a $7$-shallow set in $\Hcal(V,\Rbl,m)$.
    Then, $X = Y \cup Z$ is a $21$-shallow hitting set of $\Hcal(V,\Rbl \cup \Rtl,m)$.
\end{proof}

\begin{theorem}
\label{thm:unit-height-rectangles}
    Let $\Ruh$ be the range family of all unit-height axis-aligned rectangles in $\Rbb^2$ and $m$ be a positive integer.
    Then, for every finite point set $V \subset \Rbb^2$ the hypergraph $\Hcal(V, \Ruh, m)$ admits a $63$-shallow hitting set $X \subseteq V$.
    Moreover, $m_{\Ruh}(k) \leq 2m_{\Rbl\cup\Rtl}(k)-1 \leq 12k - 7$ for the range family $\Ruh$.
\end{theorem}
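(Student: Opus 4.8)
The plan is to reduce the unit-height case to the bottomless-and-topless case by slicing $\Rbb^2$ into horizontal strips of height exactly $1$. First I would pick a generic offset $\delta$ so that no point of $V$ lies on any line $y=\ell+\delta$ with $\ell\in\Zbb$, and set $S_\ell=\Rbb\times[\ell+\delta,\ell+1+\delta)$ and $V_\ell=V\cap S_\ell$. Two geometric facts then need to be checked, both immediate from the choice of strips: (i) every unit-height rectangle $R=[a,b]\times[c,c+1]$ is contained in $S_\ell\cup S_{\ell+1}$ for $\ell=\lfloor c-\delta\rfloor$; and (ii) in the lower strip $R\cap V_\ell=([a,b]\times[c,\infty))\cap V_\ell$ is a topless captured set with respect to $V_\ell$, while in the upper strip $R\cap V_{\ell+1}=([a,b]\times(-\infty,c+1])\cap V_{\ell+1}$ is a bottomless captured set with respect to $V_{\ell+1}$.

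For the shallow hitting set, let $m'=\lceil m/2\rceil$. For each $\ell$ with $|V_\ell|\ge m'$ let $X_\ell\subseteq V_\ell$ be a $21$-shallow hitting set of $\Hcal(V_\ell,\Rbl\cup\Rtl,m')$ as guaranteed by \cref{thm:bottomless-topless-shallow-hitting-set}, and for each $\ell$ with $|V_\ell|<m'$ let $X_\ell=\emptyset$ (in which case $\Hcal(V_\ell,\Rbl\cup\Rtl,m')$ has no hyperedges anyway). Set $X=\bigcup_\ell X_\ell$. To see $X$ is hitting, let $R\cap V$ be a hyperedge of $\Hcal(V,\Ruh,m)$; by (i) it lies in two strips $S_\ell,S_{\ell+1}$, and since $|R\cap V|=m$ some strip $S_i$ has $|R\cap V_i|\ge\lceil m/2\rceil=m'$. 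Then $|V_i|\ge m'$, so $X_i$ is a genuine hitting set, and by (ii) the set $R\cap V_i$ is a bottomless or topless captured set of $V_i$ of size $\ge m'$, which by shrinkability of $\Rbl$ and $\Rtl$ contains a hyperedge of $\Hcal(V_i,\Rbl\cup\Rtl,m')$; that hyperedge is hit by $X_i$, so $X$ hits $R\cap V$.

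For $63$-shallowness, fix a hyperedge $R\cap V$ of size $m$ lying in $S_\ell\cup S_{\ell+1}$ and put $s_j=|R\cap V_j|$ for $j\in\{\ell,\ell+1\}$, so $s_\ell+s_{\ell+1}=m\le 2m'$; in particular at most one of $s_\ell,s_{\ell+1}$ exceeds $m'$. Consider a strip $S_j$ with $s_j>0$: if $|V_j|<m'$ then $X_j=\emptyset$ contributes nothing; otherwise, by (ii) the set $e=R\cap V_j$ equals a block of $s_j$ consecutive points, in the $x$-order, of the points of $V_j$ on the relevant side of the horizontal line bounding $R$, so if $s_j\le m'$ then (since $|V_j|\ge m'$) $e$ is contained in a single hyperedge of $\Hcal(V_j,\Rbl\cup\Rtl,m')$ and $|e\cap X_j|\le 21$, while if $m'<s_j\le 2m'$ then $e$ is covered by two such hyperedges and $|e\cap X_j|\le 42$. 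Summing over the at most two relevant strips, and using that the $42$-bound is needed for at most one of them, gives $|R\cap X|\le 42+21=63$.

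For the bound $m_{\Ruh}(k)\le 2m_{\Rbl\cup\Rtl}(k)-1$, run the same slicing with $m=2m'-1$ where $m'=m_{\Rbl\cup\Rtl}(k)$: for each $\ell$ take a polychromatic $k$-coloring of $\Hcal(V_\ell,\Rbl\cup\Rtl,m')$ (trivial if $|V_\ell|<m'$) and combine these into a coloring $c$ of $V$. Given a hyperedge $R\cap V$ of $\Hcal(V,\Ruh,m)$, some strip $S_i$ contains $\ge\lceil m/2\rceil=m'$ of its points, and these form a bottomless or topless captured set of $V_i$ that by shrinkability contains a polychromatically colored hyperedge of size $m'$, so $R\cap V$ sees all $k$ colors. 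With \cref{thm:bottomless-topless-m(k)} this yields $m_{\Ruh}(k)\le 2m_{\Rbl\cup\Rtl}(k)-1\le 12k-7$. I expect the main obstacle to be the constant bookkeeping in the $63$-shallowness step — in particular the observation that $s_\ell+s_{\ell+1}=m\le 2m'$ forces only one strip to require two covering hyperedges — together with the routine but fiddly verification that a topless or bottomless captured set of size $\le 2m'$ inside a strip is covered by at most two hyperedges of size exactly $m'$, and the handling of strips too sparse for \cref{thm:bottomless-topless-shallow-hitting-set} to apply.
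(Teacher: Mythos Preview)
Your proposal is correct and follows essentially the same approach as the paper: slice $\Rbb^2$ into horizontal unit strips, apply \cref{thm:bottomless-topless-shallow-hitting-set} inside each strip, and observe that any unit-height rectangle meets two consecutive strips as a topless and a bottomless captured set. Your treatment is in fact a bit more careful than the paper's --- you make explicit the generic offset~$\delta$, the handling of strips with $|V_\ell|<m'$ via $X_\ell=\emptyset$, and the reason only one of the two strips can require two covering $m'$-hyperedges --- but the underlying argument and the arithmetic leading to $42+21=63$ and to $m_{\Ruh}(k)\le 2m_{\Rbl\cup\Rtl}(k)-1$ are identical.
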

\begin{proof}
    Let $m$ be a positive integer and $V \subset \Rbb^2$ be a finite point set.
    Define $m' = \lceil m/2 \rceil$.
    For every integer $a \in \Zbb$, let $H_a = \Hcal(V_a, \Rbl \cup \Rtl, m')$ be the range capturing hypergraph induced by the range family of all bottomless and topless rectangles, where $V_a$ is the set of all points $p$ in $V$ with $a \leq y(p) < a+1$.
    By \cref{thm:bottomless-topless-shallow-hitting-set}, every $H_a$ admits a $21$-shallow hitting set $X_a$.
    Then, $X=\bigcup_{a \in \Zbb} X_a$ is a $63$-shallow hitting set in $\Hcal(V,\Ruh,m)$, which can be seen as follows.
    Every unit-height rectangle induces a topless rectangle $R_t$ in $H_a$ and a bottomless rectangle $R_b$ in $H_{a+1}$ (for some $a$).
    Then, at least one of $R_t$ and $R_b$ contains at least $\lceil m/2 \rceil = m'$ points of $V$, without loss of generality $R_t$.
    Therefore, $R_t$ contains a point of $X_a$ and hence, $X$ is hitting.
    Since $m \leq 2m'$, the topless rectangle $R_t$ can be covered with at most two topless rectangles of size $m'$ of $H_a$, and $R_b$ can be covered with at most one bottomless rectangle of size $m'$ of $H_{a+1}$.
    As each of these three rectangles contains at most $21$ points of $X$, we conclude that $X$ is $t$-shallow for $t=3 \cdot 21 = 63$.

    Using the same argument, it is not difficult to see that $m_{\Ruh}(k) \leq 2m_{\Rbl\cup\Rtl}(k)-1$.
    Let $m=2m_{\Rbl\cup\Rtl}(k)-1$ and let $H=\Hcal(V, \Ruh, m)$ be the range capturing hypergraph induced by all unit-height rectangles.
    Let $m' = \lceil m/2 \rceil = m_{\Rbl\cup\Rtl}(k)$.
    For every $a \in \Zbb$, color each $H_a = \Hcal(V_a, \Rbl \cup \Rtl, m')$ polychromatically with $k$ colors with respect to bottomless and topless rectangles $\Rbl \cup \Rtl$.
    This polychromatic coloring exists by \cref{thm:bottomless-topless-m(k)} and since $m'=m_{\Rbl\cup\Rtl}(k)$.
    Then, every unit-height rectangle $R$ induces a topless rectangle $R_t$ in $H_a$ of size at least $m'$ or a bottomless rectangle $R_b$ in $H_{a+1}$ of size at least $m'$ (for some $a \in \Zbb$).
    Since $R_t$ (respectively $R_b$) contains points of all colors, so does $R$.
    Therefore, each unit-height rectangle with $m$ points contains points of all colors and we have found a polychromatic $k$-coloring of $\Hcal(V,\Ruh,m)$.
\end{proof}

\section{Conclusions}
\label{sec:conclusions}

In this paper, we extended the list of range families $\Rcal$ for which the corresponding uniform range capturing hypergraphs admit shallow hitting sets.
This in particular implies that $m_{\Rcal}(k) = O(k)$ for that family $\Rcal$, while $m_\Rcal(k) \geq k$ always holds.
In view of \cref{oque:natural-family}, it would be interesting to investigate further range families $\Rcal$ for which $m_{\Rcal}(k) < \infty$ is known, as to whether they admit shallow hitting sets.
The current state of the art (for a selection of range families) is summarized in \cref{tab:state-of-the-art}.

\begin{table}
    \renewcommand{\arraystretch}{1.3}
    \begin{tabularx}{\textwidth}{cXXX}
        \toprule
        & range family $\Rcal$ & $t$-shallow hitting sets exist & $m_\Rcal(k)$ \\
        \midrule
        \ref{1:known-strips} & axis-aligned strips in $\Rbb^d$ & \textbf{Yes} for $t \geq 3\euler d(1+o(1))$ \newline \hspace*{1pt} \hfill {\small (\cref{thm:union-of-strips})} & $O_d(k)$ \hfill {\small (\cref{cor:union-of-strips-2})} \\

        \ref{2:known-bottomless} & bottomless rectangles in $\Rbb^2$ & \textbf{Yes} for $t \geq 10$ \newline \hspace*{1pt} \hfill {\small (\cref{thm:bottomless-shallow-hitting-set})} & $\leq 3k-2$ \hfill \cite{ACCCHHKLLMRU13} \\

        \ref{3:known-halfplanes} & half-planes in $\Rbb^2$ & \textbf{Yes} for $t \geq 2$ \hfill \cite{SY12} & $\leq 2k-1$ \hfill \cite{SY12} \\

        \ref{4:known-squares} & axis-aligned squares in $\Rbb^2$ & \textbf{Open} & $O(k^{8.75})$ \hfill \cite{AKV17} \\

        \ref{5:known-bottomless-topless} & bottomless and topless rectangles in $\Rbb^2$ & \textbf{Yes} for $t \geq 21$ \newline \hspace*{1pt} \hfill {\small (\cref{thm:bottomless-topless-shallow-hitting-set})} & $\leq 6k-3$ \hfill {\small (\cref{thm:bottomless-topless-m(k)})} \\

        \ref{6:known-polygon} & translates of a convex polygon in $\Rbb^2$ & \textbf{Open} & $O(k)$ \hfill \cite{GV09} \\

        \ref{7:known-triangle} & homothets of a triangle in $\Rbb^2$ & \textbf{Open} & $O(k^{4.09})$ \hfill \cite{KP15} \\

        \ref{8:known-octant} & translates of octants in $\Rbb^3$ & \textbf{No} \hfill \cite{CKMPUV23} & $O(k^{5.09})$ \hfill \cite{KP15}\\
        \bottomrule
    \end{tabularx}
    \caption{Shallow hitting sets and polychromatic colorings for range capturing hypergraphs.}
    \label{tab:state-of-the-art}
\end{table}

Let us also mention that Keszegh and P{\'a}lv{\"o}lgyi~\cite{KP19} define a $k$-coloring $c \colon V \to \{1,\ldots,k\}$ of a hypergraph $H = (V,E)$ to be \emph{$t$-balanced} if for any two colors $i,j \in \{1,\ldots,k\}$ and any hyperedge $e \in E$ we have $|\{v \in e \mid c(v) = i\}| \leq t \cdot (|\{v \in e \mid c(v) = j\}|+1)$, i.e., in each hyperedge there are at most roughly $t$ times as many vertices of color $i$ than of color $j$.
They show that if a (shrinkable) range family admits $t$-shallow hitting sets then it also allows for $t$-balanced $k$-colorings for every $k$.
And conversely, if we have $t$-balanced $k$-colorings for every $k$, then we have $t^2$-shallow hitting sets.
Thus, \cref{thm:bottomless-shallow-hitting-set} for example gives that every range capturing hypergraph $\Hcal(V,\Rbl,m)$ for bottomless rectangles admits a $10$-balanced $k$-coloring for every $k \geq 2$.

Let us also mention Beck's three permutation conjecture made in 1987, which asks whether the discrepancy of the hypergraph $\Hcal(V,\Rst)$ induced by all axis-aligned strips in the case of $d=3$ dimensions is $O(1)$.
The conjecture has been refuted in 2012~\cite{NNN12}, and although discrepancy is defined via a $2$-coloring being ``balanced'' in a certain way, we do not see an immediate connection to balanced or polychromatic colorings.

\bibliographystyle{plainurl}
\bibliography{literatur}

\end{document}